\newcommand{\veq}{\mathrel{\rotatebox{90}{$=$}}}
\def\antisymm{\mathop{\rm anti}\nolimits}
\newcommand{\bfA}{{\textbf{A}}}
\newcommand{\bfX}{{\textbf{X}}}
\newcommand{\calX}{{\mathcal X}}
\newcommand{\calA}{{\mathcal A}}
\newcommand{\calB}{{\mathcal B}}
\newcommand{\calY}{{\mathcal Y}}
\newcommand{\calZ}{{\mathcal Z}}
\newcommand{\calS}{{\mathcal S}}
\newcommand{\calT}{{\mathcal T}}
\newcommand{\R}{{\mathbb R}}
\newcommand{\mb}[1]{\left[\begin{array}{#1}}
\newcommand{\me}{\end{array}\right]}
\newcommand{\smb}{\left[\begin{smallmatrix}}
\newcommand{\sme}{\end{smallmatrix}\right]}
\newcommand{\rank}{\text{rank}}
\numberwithin{equation}{section}
\newtheorem{theorem}{Theorem}[section]
\newtheorem{remark}[theorem]{Remark}
\newtheorem{lemma}[theorem]{Lemma}
\newtheorem{corollary}[theorem]{Corollary}
\author{Erna Begovi\'c\ Kova\v{c}\footnote{Faculty of Chemical Engineering and Technology, University of Zagreb, 10000 Zagreb, Croatia. {\tt ebegovic@fkit.hr} The author has been supported in part by Croatian Science Foundation under the project 3670.}
\and Daniel Kressner\footnote{MATHICSE-ANCHP, EPF Lausanne, 1015 Lausanne, Switzerland. {\tt daniel.kressner@epfl.ch}}
}
\title{Structure-preserving low multilinear rank approximation  \\ of antisymmetric tensors}
\begin{document}

\maketitle

\textit{Key words:} tensor, antisymmetric, low rank, singular value decomposition, Jacobi rotation.

\vspace{1ex}
\textit{AMS subject classification.} 15A69, 65F99

\begin{abstract}
This paper is concerned with low multilinear rank approximations to antisymmetric tensors, that is, multivariate arrays for which the entries change sign
when permuting pairs of indices. We show which ranks can be attained by an antisymmetric tensor and discuss the adaption of existing approximation algorithms to preserve antisymmetry, most notably a Jacobi algorithm. Particular attention is paid to the important special case when choosing the rank equal to the order of the tensor. It is shown that this case can be addressed with an unstructured rank-$1$ approximation. This allows for the straightforward application
of the higher-order power method, for which we discuss effective initialization strategies.
\end{abstract}

\section{Introduction}

A tensor $\calA \in \R^{n\times \cdots \times n}$ of order $d\ge 2$  is called \emph{antisymmetric}
if its entries $\calA(i_1,i_2,\ldots, i_d)$ change sign when permuting pairs of indices.
For example, a tensor of order three with entries $\calA(i_1,i_2,i_3)$ is antisymmetric if
\[
 \calA(i_1,i_2,i_3) = -\calA(i_2,i_1,i_3) = -\calA(i_3,i_2,i_1) = -\calA(i_1,i_3,i_2), \qquad
 i_1,i_2,i_3 = 1,\ldots, n.
\]
A direct consequence of antisymmetry is that the entries $\calA(i_1,i_2,\ldots, i_d)$ of $\calA$ satisfy
\begin{equation}\label{notzero}
\calA(i_1,i_2,\ldots,i_d)=0, \quad \text{if} \  i_p=i_q, \ \text{for some} \ p\neq q, \ 1\leq p,q\leq d.
\end{equation}
For order two, the notion of antisymmetric tensors coincides of course with the notion of skew-symmetric matrices.

Antisymmetric tensors play a major role in quantum chemistry, where the Pauli exclusion principle
implies that wave functions of fermions are antisymmetric under
permutations of variables. This antisymmetry needs to be taken into account
when solving the multiparticle Schr\"odinger equation determining such a wave function; see~\cite{Szalay2015}
for a recent overview.

This paper is concerned with finding an approximation $\calB$ to a given
antisymmetric tensor $\calA$ such that $\calB$ has a data-sparse representation and is again antisymmetric.
More specifically, we will consider an approximation of multilinear rank $r$ in structure-preserving Tucker decomposition
\begin{equation} \label{eq:tuckerb}
 \calB = \calS \times_1 U \times_2 U \cdots \times_d U,
\end{equation}
where $\calS \in \R^{r\times \cdots \times r}$ for some $r \le n$ is again antisymmetric and $U \in \R^{n\times r}$ has orthonormal columns.
This choice is analogous to existing approaches for symmetric tensors, see, e.g.,~\cite{DeLathauwer2000, LathauwerLowrank2000,Kofidis2002}.
In this paper, we demonstrate that some existing algorithms for the symmetric case extend to the antisymmetric case. In particular,
we study the extension of the Jacobi algorithm by Ishteva, Absil, and Van Dooren~\cite{IshtevaJac2013}.

Despite a number of similarities, there are pronounced differences between symmetric and antisymmetric tensors.
For example, every (multilinear) rank $r$ can be attained by a symmetric matrix or tensor.
In contrast, it is well known that skew-symmetric matrices have even rank.
Although this statement does not extend to $d>2$, we will see that there are still restrictions on the
ranks that can be attained by anti-symmetric tensors.
In particular, the smallest possible nonzero rank is $r = d$. In this case, the decomposition~\eqref{eq:tuckerb}
simplifies to
\begin{equation} \label{eq:rank1}
 \antisymm\big(\alpha u_1 \otimes u_2 \otimes \cdots \otimes u_d \big), \quad \alpha \in \R,
\end{equation}
with the antisymmetrizer $\calA = \antisymm(\calX)$ defined by
\begin{equation} \label{eq:antisymmetrizer}
 \calA(i_1,\ldots,i_d) := \frac{1}{d!} \sum_{\pi \in S_d} \text{sign}(\pi) \calX\big(\pi(i_1),\pi(i_2),\ldots,\pi(i_d)\big),
\end{equation}
where $S_d$ denotes the symmetric group on $\{1,\ldots,d\}$. This corresponds to the notion of Slater determinants that
feature prominently in the Hartree-Fock method from quantum mechanics.
The expression~\eqref{eq:rank1} suggests the more general decomposition
$\antisymm(\calX)$ for a (non-symmetric) tensor $\calX$ of low \emph{tensor} rank. This
corresponds to a short sum of Slater determinants used, e.g., in the Multi-Configuration Self-Consistent Field method.
Such a low-rank model for antisymmetric tensors
has been studied in the literature. In particular, Beylkin, Mohlenkamp, and P\'erez~\cite{Beylkin2008,Beylkin2005} have
developed an alternating least-squares algorithm for approximating
a given antisymmetric tensor $\calA$ by $\antisymm(\calX)$. The algorithm employs L\"owdin's rule to avoid having to deal with the exponentially
many terms in the sum~\eqref{eq:antisymmetrizer}. One contribution of this paper is a much simpler approach for~\eqref{eq:rank1}, that is, when $\calX$ has rank $1$: The best choice of $\calX$ is given by a scalar multiple of the best (non-symmetric) rank-$1$ approximation of $\calA$. For multilinear rank $r$ larger than $d$, our developments deviate because the low-rank decomposition~\eqref{eq:tuckerb} differs from the CP-like decomposition considered in~\cite{Beylkin2008,Beylkin2005}. Working with~\eqref{eq:tuckerb} comes with a number of advantages, such as the possibility to obtain robust quasi-optimal approximation via the SVD. Its major disadvantage is the need for storing the core tensor $\calS$ of order $d$. This can be mitigated by using hierarchical SVD-based decompositions, such as the tensor train decompositions~\cite{Oseledets2011a}. However, the incorporation of antisymmetry into these decompositions is by no means as seamless as for the Tucker decomposition; see~\cite{Hackbusch2016anti} for recent progress. Although not unlikely. it remains to be seen whether the developments in this paper are useful in this context.

The rest of this paper is organized as follows. In Section~\ref{sec:multilinrank}, we study the multilinear rank
of an antisymmetric tensor and recall the higher-order singular value decomposition.
Section~\ref{sec:multilinrankapprox} is concerned with algorithms that aim at the antisymmetric
low multilinear rank approximations, the higher-order iterations method and a variant of the Jacobi method.
Section~\ref{sec:rankd} is dedicated to the special case of rank-$d$ approximation.

\section{Multilinear rank of antisymmetric tensors}\label{sec:multilinrank}

Let us first recall some basic concepts related to the multilinear rank of a tensor; see~\cite{Kolda2009} for details.
For any $1\leq \mu\leq d$, the $\mu$th \emph{matricization} of a general tensor $\calX\in\R^{n_1\times n_2\times\cdots\times n_d}$ is the $n_\mu\times\prod_{\nu\neq \mu}n_\nu$ matrix $\bfX_{(\mu)}$ defined by
\begin{equation} \label{eq:defmat}
  \bfX_{(\mu)}(i_\mu,j) = \calX(i_1,\ldots,i_d), \qquad j = j(i_1,\ldots,i_d) := 1 + \sum_{\nu = 1\atop \nu\not=\mu}^d (i_\nu -1) \prod_{\eta = 1\atop \eta \not= \mu}^{\nu-1} n_\eta.
\end{equation}
The \emph{multilinear rank} of $\calX$ is the tuple $(r_1,r_2,\ldots,r_d)$ defined by $r_\mu = \rank(\bfX_{(\mu)})$. Note that $\bfX_{(\mu)}$ is a matrix and hence $r_\mu \leq \min\{ n_\mu, \prod_{\nu\neq \mu}n_\nu \}$.

For an antisymmetric tensor, all matricizations are essentially the same.

\begin{lemma}\label{lemma:antimat}
Let $\calA\in\R^{n\times n\times\cdots\times n}$ be an antisymmetric tensor of order $d$.
Then $\bfA_{(\mu)} = (-1)^{|\mu - \nu|} \bfA_{(\nu)}$ holds for any $1 \le \mu,\nu \le d$.
\end{lemma}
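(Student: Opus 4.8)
The plan is to reduce the general identity to the single-step statement $\bfA_{(\mu)} = -\bfA_{(\mu+1)}$ for $1 \le \mu \le d-1$, and then prove the latter by a direct comparison of matrix entries. The reduction is immediate: once $\bfA_{(\mu)} = -\bfA_{(\mu+1)}$ is available for every $\mu$, telescoping gives $\bfA_{(\mu)} = (-1)^{\nu-\mu}\bfA_{(\nu)}$ whenever $\mu \le \nu$; the case $\mu > \nu$ follows by symmetry and $\mu = \nu$ is trivial, and in all cases $(-1)^{\nu-\mu} = (-1)^{|\mu-\nu|}$.

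For the single-step statement I would first unravel the column index in \eqref{eq:defmat} in the case at hand, where all mode sizes equal $n$. There the index $j$ reduces to the ordinary little-endian base-$n$ encoding of the $(d-1)$-tuple of the remaining indices: writing $(a_1,\ldots,a_{d-1})$ for $(i_1,\ldots,i_{\mu-1},i_{\mu+1},\ldots,i_d)$, one has $j = 1 + \sum_{\ell=1}^{d-1}(a_\ell-1)n^{\ell-1}$, and the very same function of the very same tuple describes the column index of the mode-$(\mu+1)$ matricization, with $i_{\mu+1}$ now taking the role of the removed coordinate. Consequently a fixed matrix position $(k,j)$ corresponds in $\bfA_{(\mu)}$ to the entry $\calA(a_1,\ldots,a_{\mu-1},k,a_\mu,a_{\mu+1},\ldots,a_{d-1})$ and in $\bfA_{(\mu+1)}$ to the entry $\calA(a_1,\ldots,a_{\mu-1},a_\mu,k,a_{\mu+1},\ldots,a_{d-1})$, where $(a_1,\ldots,a_{d-1})$ is the common decoding of $j$. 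These two entries of $\calA$ differ precisely by transposing the coordinates in positions $\mu$ and $\mu+1$, so antisymmetry shows that one is the negative of the other --- this also covers the degenerate case $k = a_\mu$, where both entries vanish by \eqref{notzero}. Hence $\bfA_{(\mu)}(k,j) = -\bfA_{(\mu+1)}(k,j)$ for all $k,j$, which is the desired $\bfA_{(\mu)} = -\bfA_{(\mu+1)}$.

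The only delicate point is the bookkeeping in the middle step: one must check that, because every mode has the same size $n$, deleting coordinate $\mu$ versus deleting coordinate $\mu+1$ keeps all the remaining coordinates in the same positions of the column multi-index, so that $\bfA_{(\mu)}$ and $\bfA_{(\mu+1)}$ are not merely of equal shape but genuinely comparable entrywise, and leaves exactly one adjacent transposition between the two associated tensor entries. Once this is nailed down, the antisymmetry argument is a one-liner.
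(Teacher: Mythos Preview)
Your proof is correct and follows essentially the same approach as the paper: both compare the entry at a fixed matrix position $(k,j)$ in two different matricizations by decoding $j$ into the common $(d-1)$-tuple of remaining indices and then invoking antisymmetry on the resulting permutation of tensor coordinates. The only organizational difference is that you first reduce to the adjacent case $\bfA_{(\mu)}=-\bfA_{(\mu+1)}$ and telescope, whereas the paper handles general $\mu<\nu$ in one stroke by observing that the relevant permutation is the cycle moving position $\mu$ to position $\nu$, which has sign $(-1)^{\nu-\mu}$; this is a cosmetic distinction rather than a different idea.
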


\begin{proof}
Without loss of generality, let $\mu \le \nu$.
According to~\eqref{eq:defmat}, $\bfX_{(\mu)}(i_\mu,j) = \calX(i_1,\ldots,i_d)$ implies
$\bfX_{(\nu)}(i_\mu,j) = \calX(i_1,\ldots,i_{\mu-1}, i_{\mu+1}, \ldots, i_{\nu}, i_{\mu}, i_{\nu+1},\ldots, i_d )$.
The result follows from the observation that the permutation $(1,\ldots,\mu-1, \mu+1, \ldots, \nu, \mu, \nu+1,\ldots, d)$ has sign $(-1)^{|\mu - \nu|}$.
\end{proof}

Lemma~\ref{lemma:antimat} implies that the multilinear rank of $\calA$ always takes the form $(r,\ldots, r)$ for some $1 \le r \le n$. In the following, we will simply refer to $r$ as the multilinear rank of an antisymmetric tensor.

\subsection{Restrictions on the multilinear rank}

It is well known that skew-symmetric matrices have even rank. It turns out that this property does not extend to antisymmetric tensors; it is simple to construct tensors of higher order with odd multilinear ranks. However, the following theorem shows that antisymmetry still imposes some (weaker) restrictions on the ranks of antisymmetric tensors that are of small size $n$ relative to $d$.

\begin{theorem}\label{largestmultilinrank}
Let $\calA\in\R^{n\times n\times\cdots\times n}$ be an antisymmetric tensor of order $d \ge 3$. Then the multilinear rank $r$ of $\calA$ satisfies
\begin{itemize}
\item[(i)] $r=0$ for $n<d$;
\item[(ii)] $r\leq d$ for $n=d$ or $n=d+1$;
\item[(iii)] $r\leq n$ for $n\geq d+2$.
\end{itemize}
There exist tensors $\calA$ for which equality is attained in $(i)$--$(iii)$.
\end{theorem}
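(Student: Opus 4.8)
The plan is to treat the three cases separately, in each case giving both the upper bound and an explicit construction achieving equality.

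For (i), the statement $r=0$ when $n<d$ is immediate from~\eqref{notzero}: if $n<d$, then by the pigeonhole principle any index tuple $(i_1,\ldots,i_d)$ with entries in $\{1,\ldots,n\}$ must repeat some value, so every entry of $\calA$ vanishes; hence $\calA=0$ and $r=0$. Equality (i.e.\ the rank being exactly $0$) is trivially attained.

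For the upper bounds in (ii) and (iii), the key observation is that the columns of the matricization $\bfA_{(1)}$ are indexed by tuples $(i_2,\ldots,i_d)$, and by~\eqref{notzero} the only possibly nonzero columns are those whose $d-1$ indices are \emph{distinct} and, moreover, distinct from the row index $i_1$. I would first reduce to counting distinct columns up to sign: by antisymmetry in the last $d-1$ slots, permuting $(i_2,\ldots,i_d)$ only changes the column by a sign, so the column space of $\bfA_{(1)}$ is spanned by the columns coming from \emph{sorted} tuples $i_2<i_3<\cdots<i_d$. The number of such sorted tuples avoiding a fixed row value is $\binom{n-1}{d-1}$, but that is not yet the bound we want; instead we bound $r=\rank \bfA_{(1)}$ by the number of \emph{rows}, or more cleverly by using the antisymmetry once more. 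For $n=d$: a sorted tuple of $d-1$ distinct values in $\{1,\ldots,n\}\setminus\{i_1\}$ is uniquely determined, so $\bfA_{(1)}$ has exactly one nonzero column direction per row, giving at most $n=d$ nonzero columns total and hence $r\le d$. For $n=d+1$: I would argue that the nonzero columns of $\bfA_{(1)}$ (one sorted tuple misses exactly two of the $n$ values, one of which is $i_1$) can be organized so that the column space has dimension at most $d$; concretely, index the distinct sorted $(d-1)$-tuples by the pair of omitted values, relate the resulting $\binom{n}{2}\times n$ sign pattern to a known small-rank matrix, and read off $r\le d$. For $n\ge d+2$ the bound $r\le n$ is just the trivial matricization bound $r_\mu\le n_\mu=n$, so (iii)'s inequality needs no work.

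For the equality cases in (ii) and (iii), I would exhibit a single explicit antisymmetric tensor of the right rank. The natural candidate is $\calA=\antisymm(e_1\otimes e_2\otimes\cdots\otimes e_d)$ padded into $\R^{n\times\cdots\times n}$, whose rank equals $d$ (its matricization has column space spanned by $e_1,\ldots,e_d$, using the analysis of~\eqref{eq:rank1}); this handles the equality $r=d$ for $n=d$ and $n=d+1$. For $n\ge d+2$ I must produce an antisymmetric tensor of rank exactly $n$: I would take a sum of antisymmetrized rank-one terms $\sum_k \antisymm(u_{k,1}\otimes\cdots\otimes u_{k,d})$ chosen so that the union of the supporting vectors spans all of $\R^n$, for instance a "sliding window" family indexed so that consecutive index blocks $\{j,j+1,\ldots,j+d-1\}$ overlap; a genericity/perturbation argument then shows the matricization attains full row rank $n$. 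The main obstacle I anticipate is the upper bound $r\le d$ for the case $n=d+1$: unlike $n=d$, here there are genuinely $\binom{d+1}{2}$ distinct sorted column directions, so the claim $r\le d$ is a real cancellation phenomenon rather than a counting bound, and making it rigorous will require either exhibiting $d$ explicit spanning columns (equivalently, showing each column is a $\pm1$ combination of a fixed set of $d$ of them) or an inductive/contraction argument relating the order-$d$, size-$(d+1)$ case to the order-$(d-1)$, size-$d$ case via a single mode contraction. That reduction step — showing contracting one mode against a generic vector preserves enough rank information — is where I would expect to spend the most effort.
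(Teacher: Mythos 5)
Your outline is sound for (i), for the trivial bounds in (ii)--(iii), and for the equality construction when $n=d$ and $n=d+1$ (the paper uses exactly the tensor $\antisymm(e_1\otimes\cdots\otimes e_d)$, suitably scaled and zero-padded). But the crux of the theorem is the bound $r\le d$ when $n=d+1$, and this is precisely the step you leave open: you correctly diagnose that it is ``a real cancellation phenomenon rather than a counting bound,'' but neither of the two routes you sketch is carried out, so the proposal as written does not prove the statement. The paper's resolution is short and worth knowing: set $\alpha_k=\calA(1,\ldots,k-1,k+1,\ldots,d+1)$, so that (up to sign) every nonzero entry of $\calA$ equals some $\alpha_k$, and verify the explicit row dependency
\[
\sum_{i_1=1}^{d+1}(-1)^{i_1}\alpha_{i_1}\,\bfA_{i_1,(1)}=0 .
\]
This is checked column by column: a nonzero fiber $\calA(:,i_2,\ldots,i_d)$ omits exactly two values $k<\ell$ of $\{1,\ldots,d+1\}$, its only two nonzero entries are $(-1)^{k-1}\alpha_\ell$ in row $k$ and $(-1)^{\ell-2}\alpha_k$ in row $\ell$, and the two surviving terms cancel. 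Since $\calA\neq 0$ forces some $\alpha_k\neq 0$, the dependency is nontrivial and $\rank \bfA_{(1)}\le d$. Your ``$\binom{n}{2}\times n$ sign pattern'' viewpoint is compatible with this (the essential columns form a signed, weighted incidence matrix of the complete graph on $d+1$ vertices, whose left kernel contains $((-1)^k\alpha_k)_k$), but you would still have to compute the signs and exhibit the kernel vector; that computation is the proof.

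There is a second, smaller gap in the equality part of (iii). A ``genericity/perturbation argument'' cannot establish that rank $n$ is attained, because showing that the rank-$n$ locus is nonempty is exactly what requires a witness; genericity only propagates an existing example. The paper instead verifies a concrete sliding-window tensor: with $h=(1,\ldots,n,1,\ldots,d-1)$ and $\calX$ supported on the $n$ cyclic windows $\sigma_k=\{h_k,\ldots,h_{k+d-1}\}$, one checks that for $n\ge d+2$ each set $\sigma_k\setminus\{h_{k+1}\}$ is contained in no other window, so the corresponding fiber of $\antisymm(\calX)$ is a unit vector, and the $n$ resulting fibers are $e_1,\ldots,e_n$ up to relabeling. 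Your construction idea is the same; the missing content is this combinatorial uniqueness check (which is also where the hypothesis $n\ge d+2$ enters --- for $n=d+1$ the windows overlap too much and the argument, consistently with part (ii), must fail).
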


\begin{proof}
\begin{itemize}
\item[(i)] According to~\eqref{notzero}, all $d$ indices $i_1,\ldots, i_d \in [1, n]$
need to be mutually different for an entry $\calA(i_1,i_2,\ldots,i_d)$ to be nonzero. When $n<d$, this is clearly not possible and hence $\calA = 0$.

\item[(ii)] For $n=d$, the condition $r \leq d$ follows from the size of the matricizations. To show that equality is attained, consider the tensor
$\calA = d!\,\antisymm(\calX)$ where all entries of $\calX \in \R^{d\times \cdots \times d}$ are zero except for $\calX(1,2,\ldots,d) = 1$.
For arbitrary $1\le i \le d$ choose the permutation $p = (i,1,\ldots,i-1, i+1, \ldots, d)$.
By definition, $\calA(p) = \text{sign}(p) = (-1)^{i-1}$. By letting $j = j\big(p(2),\ldots,p(d)\big)$, it follows that the $j$th column of $\bfA_{(1)}$ equals $(-1)^{i-1} e_i$ with the $i$th unit vector $e_i \in \R^n$. In particular, $\bfA_{(1)}$ has $d$ linearly independent columns and is thus of rank $d$.

Now, let $n=d+1$ and assume, without loss of generality, that $\calA \not=0$. We denote the rows of the matricization
$\bfA_{(1)}$ by $\bfA_{1,(1)},\ldots,\bfA_{d+1,(1)} \in \R^{n^{d-1}}$.
This matricization has rank at most $d$ if we can show that these rows are linearly dependent. Let
\[\alpha_k := \calA(1,\ldots,k-1,k+1,\ldots,d+1), \quad k = 1,\ldots,d+1.\]
Since $\calA \not =0$, at least one $\alpha_k$ is different from zero. Let us now consider the column of $\bfA_{(1)}$ corresponding to a fiber $\calA(:,i_2,\ldots, i_d)$ for some $i_2,\ldots,i_d \in [1,d+1]$. We may assume that $i_2,\ldots,i_d$ are mutually distinct because otherwise this fiber is zero.
For the moment, we also assume that these indices are ordered, that is, $1 \le i_2 < i_3 < \cdots < i_d \le d+1$.
By the pigeon hole principle, there are two integers $1 \le k < \ell \le d+1$ such that $k,l \not\in \{i_2,\ldots, i_d\}$.
The situation is now as follows:
$$\begin{array}{ccccccccccc}
  i_2 & \cdots & i_k &  & i_{k+1} & \cdots & i_{l-1} &  & i_l & \cdots & i_d \\
  \veq & \cdots & \veq &  & \veq & \cdots & \veq &  & \veq & \cdots & \veq \\
  1 & \cdots & k-1 & k & k+1 & \cdots & l-1 & l & l+1 & \cdots & d+1
\end{array}$$
In particular, this implies
\begin{eqnarray*}
  \calA(k,i_2,\ldots, i_d) &=& (-1)^{k-1} \calA(i_2, \ldots, i_{k-1}, k, i_{k+1}, \ldots, i_d) = (-1)^{k-1} \alpha_\ell, \\
  \calA(\ell,i_2,\ldots, i_d) &=& (-1)^{\ell-2} \calA(i_2, \ldots, i_{\ell-1}, \ell, i_{\ell+1}, \ldots, i_d) = (-1)^{\ell-2} \alpha_k.
\end{eqnarray*}
Using that $\calA(i_1,i_2,\ldots, i_d)$ is only nonzero for mutually distinct indices, we arrive at the linear combination
\begin{eqnarray*}
  \sum_{i_1 = 1}^{d+1} (-1)^{i_1} \alpha_{i_1} \calA(i_1,i_2,\ldots, i_d) &=& (-1)^{k} \alpha_{k} \calA(k,i_2,\ldots, i_d)
 + (-1)^{\ell} \alpha_{\ell} \calA(\ell,i_2,\ldots, i_d) \\
 &=& (-1)^{2k-1} \alpha_{k} \alpha_{\ell} + (-1)^{2\ell-2} \alpha_{k} \alpha_{\ell} \\
 &=& -\alpha_{k} \alpha_{\ell}+\alpha_{k} \alpha_{\ell} = 0.
\end{eqnarray*}
Since this relation is not affected by a permutation of $i_2,\ldots, i_d$, it also holds if these indices are not ordered.
In summary, we have shown that
\[
 \sum_{i_1 = 1}^{d+1} (-1)^{i_1} \alpha_{i_1} \bfA_{i_1,(1)} = 0
\]
and thus the rank of $\bfA_{(1)}$ is at most $d$.

For $n=d+1$ equality is attained by the tensor used in the construction for $n = d$ bordered with zeros.

\item[(iii)] Let $n\geq d+2$. By the size of the matricization, $r\leq n$. To show that $r = n$ can be attained, let us first define the integer vector
$h = (1,2,\ldots,n,1,\ldots,d-1)$. We choose the tensor $\calX\in\R^{n\times n\times\cdots\times n}$ to be zero except for
\[
 \calX(h_k,h_{k+1},h_{k+2},\ldots,h_{k+d-1}) = -d!, \qquad k = 1,2,\ldots, n.
\]
The corresponding sets $\sigma_k = \{h_k,h_{k+1},h_{k+2},\ldots,h_{k+d-1}\} \subset \mathbb N$ all have cardinality $d$ for $k = 1,2,\ldots, n$.
The set $\{1, 3, 4,\ldots,d\} = \sigma_1 \setminus \{2\}$ is only contained in $\sigma_1$. In particular, $n> d \ge 3$ implies that it is not contained in
$\sigma_n = \{n,1,\ldots,{d-1\}}$.
This shows ${\calA(:,1,3,4,\ldots,d)} = e_2.$
Analogously, $\calA(:,2,4,5,\ldots,d+1) = e_3$ and $\calA(:,3,5,6,\ldots,{d+2)} = e_4$.
This construction can be continued until we arrive at the set $\{n-1,1,\ldots,d-2\} = \sigma_{n-1} \setminus \{n\}$, which is not contained in $\sigma_1$ because of $n-1>d$,
or in any other $\sigma_k$ except $\sigma_{n-1}$. Hence, $\calA(:,n-1,1,\ldots,d-2) = e_n$.
Finally, we have $\calA(:,n,2,\ldots,d-2) = e_1$. In summary, we have found $n$ linearly independent columns of $\bfA_{(1)}$ and, therefore, the multilinear rank of $\calA$ is $n$.
\end{itemize}
\end{proof}

\subsection{HOSVD}

Given a general tensor $\calX \in \R^{n_1 \times \cdots \times n_d}$, the higher-order singular value decomposition (HOSVD) introduced in~\cite{DeLathauwer2000}
proceeds by computing the SVDs of the matricizations $\mathbf{X}_{(\mu)}$, $1\leq \mu \leq d$, and letting $V_\mu \in \R^{n_\mu \times n_\mu}$ contain the left singular vectors. Setting $\calT = \calX \times_1 V^T_1\times_2 V^T_2\times_3\cdots\times_d V^T_d$ yields the Tucker decomposition
\[
 \calX = \calT \times_1 V_1\times_2 V_2 \cdots\times_d V_d.
\]
The truncated HOSVD for a given multilinear rank $(r_1,\ldots, r_d)$ with $r_\mu \le n_\mu$ is obtained by setting
\begin{equation}\label{eq:thosvd}
\calS \times_1 U_1\times_2 U_2 \cdots\times_d U_d,
\end{equation}
with $U_\mu = V_\mu(:,1:r_\mu)$ and $\calS = \calT(1:r_1, 1:r_2,\ldots, 1:r_\mu)$. This gives a quasi-best approximation of $\calX$, in the sense that the approximation error in the Frobenius norm, $\|\calX - \calS \times_1 U_1 \cdots\times_d U_d\|$, is within a factor $\sqrt{d}$ of the error of the best rank-$(r_1,\ldots, r_d)$ approximation. In particular, if $\calX$ happens to have multilinear rank $(r_1,\ldots, r_d)$ then the decomposition~\eqref{eq:thosvd} is exact.

We now apply the truncated HOSVD to obtain an approximation of multilinear rank $r$ to an antisymmetric tensor $\calA$. By Lemma~\ref{lemma:antimat}, all matrices $U_\mu$ in~\eqref{eq:thosvd} can be chosen equal to a fixed matrix $U$. In turn $\calS = \calA \times_1 U^T \cdots\times_d U^T$ is again antisymmetric. In summary, the truncated HOSVD described in Algorithm~\ref{alg:thosvd} automatically preserves structure and produces a quasi-best antisymmetric approximation.

\begin{algorithm}[H] \small
\caption{Truncated HOSVD of antisymmetric tensor}
\label{alg:thosvd}
\begin{algorithmic}
\STATE Compute matrix $U \in \R^{n \times r}$ containing the leading $r$ left singular vectors of $\mathbf{A}_{(1)}$.
\STATE Set $\calS=\calA \times_1 U^T \cdots\times_d U^T$.
\STATE Return approximation $\calS\times_1U \cdots \times_d U$.
\end{algorithmic}
\end{algorithm}

\begin{corollary}
Let $\calA$ be an antisymmetric tensor of order $d$. Then the multilinear rank $r$ of $\calA$ satisfies $r = 0$ or $r = d$ or $d+2 \le r \le n$.
Any of these ranks can be attained.
\end{corollary}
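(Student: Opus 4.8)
The plan is to derive the corollary by combining Theorem~\ref{largestmultilinrank} (which we apply for $d\ge 3$; the case $d=2$ is the classical statement that skew-symmetric matrices have even rank) with a self-reduction argument: an antisymmetric tensor of multilinear rank $r$ can be compressed to an antisymmetric tensor of size $r\times\cdots\times r$ that has \emph{full} multilinear rank $r$. Applying Theorem~\ref{largestmultilinrank} to this smaller tensor --- with the role of the ambient dimension $n$ there taken by $r$ --- then excludes $r\in\{1,\ldots,d-1\}$ and excludes $r=d+1$, leaving $r\in\{0\}\cup\{d\}\cup\{d+2,\ldots,n\}$.

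For the reduction, let $U\in\R^{n\times r}$ have orthonormal columns spanning the column space of $\bfA_{(1)}$; by Lemma~\ref{lemma:antimat} this is simultaneously the column space of every matricization $\bfA_{(\mu)}$. As in the discussion preceding Algorithm~\ref{alg:thosvd}, the core tensor $\calS=\calA\times_1 U^T\cdots\times_d U^T\in\R^{r\times\cdots\times r}$ is again antisymmetric, and $\calA=\calS\times_1 U\cdots\times_d U$ holds exactly. Since $U$ has full column rank, $\mathbf{S}_{(1)}$ and $\bfA_{(1)}$ have the same rank $r$, so $\calS$ is an antisymmetric tensor of order $d$ whose multilinear rank equals its size $r$. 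Invoking Theorem~\ref{largestmultilinrank} for $\calS$: if $r<d$ then part~(i) yields $\calS=0$, hence $r=0$; and if $r=d+1$ then part~(ii) yields $\rank(\mathbf{S}_{(1)})\le d<r$, a contradiction. The only surviving possibilities are $r=0$, $r=d$, and $d+2\le r\le n$, the last upper bound coming, as always, from the size of the matricization.

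It remains to realize each admissible value. For $r=0$ take $\calA=0$. For $r=d$, and for $r$ with $d+2\le r\le n$, take the corresponding extremal tensor from the proof of Theorem~\ref{largestmultilinrank}(ii), respectively~(iii), constructed in $\R^{r\times\cdots\times r}$, and pad it with zeros up to size $n\times\cdots\times n$ by setting to zero every entry with some index exceeding $r$. Padding with zeros preserves antisymmetry, and it turns $\bfA_{(1)}$ into the matricization of the smaller tensor bordered by zero rows and columns, so the rank is unchanged and equals $r$.

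The argument is short, and the only step requiring attention is the compression: one must know that the exact Tucker/HOSVD core of an antisymmetric tensor is again antisymmetric and of full multilinear rank. Both facts are already in hand from Lemma~\ref{lemma:antimat} and the standard properties of the Tucker decomposition recalled above, so I do not expect a genuine obstacle --- the crux is simply to exploit the self-similar structure and apply Theorem~\ref{largestmultilinrank} with $n$ replaced by $r$.
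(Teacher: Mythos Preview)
Your argument is correct and follows essentially the same route as the paper: compress $\calA$ to its antisymmetric $r\times\cdots\times r$ core $\calS$ of full multilinear rank and apply Theorem~\ref{largestmultilinrank} with the ambient dimension replaced by $r$. You spell out the attainability via padding more explicitly than the paper does, which is fine; the aside about $d=2$ is unnecessary (the corollary, like Theorem~\ref{largestmultilinrank}, is meant for $d\ge 3$, and indeed the attainability claim fails for $d=2$ since odd ranks in $[4,n]$ cannot occur).
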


\begin{proof}
By the discussion above, an antisymmetric tensor of multilinear rank $r$ can be written as $\calA=\calS\times_1 U \cdots\times_d U$, where the $r\times \cdots \times r$ tensor $\calS$ is again antisymmetric and has multilinear rank $r$. The statement of the corollary now follows from applying Theorem~\ref{largestmultilinrank} to $\calS$.
\end{proof}

Let us inspect the case $r = d$ more closely. Any antisymmetric $d\times \cdots \times d$ tensor of order $d$ takes the form
\begin{equation} \label{eq:tensord}
\calS = \antisymm(\alpha e_1 \otimes e_2 \otimes \cdots \otimes e_d),
\end{equation}
for some $\alpha \in \R$; see also the construction in the proof of Theorem~\ref{largestmultilinrank}~$(i)$. By letting $U = [u_1, u_2,  \ldots, u_d]$, the truncated HOSVD implies that any antisymmetric tensor of order $d$ and multilinear rank $d$ takes the form
\[
 \calA = \antisymm(\alpha e_1 \otimes e_2 \otimes \cdots \otimes e_d) \times_1 U \times_2 U \cdots  \times_d U =
 \antisymm(\alpha u_1 \otimes u_2 \otimes \cdots \otimes u_d),
\]
verifying the claim~\eqref{eq:rank1} from the introduction.

\section{Low multilinear rank approximation}\label{sec:multilinrankapprox}

In this section, we discuss two iterative methods that aim to compute a best antisymmetric multilinear rank-$r$
approximation
\[
 \min\big\{ \|\calA - \calS \times_1 U \cdots \times_d U\|: \ \calS \in \R^{r\times \cdots \times r} \text{ antisymmetric}, U \in \R^{n\times r}  \big\},
\]
starting, for example, from the truncated HOSVD of $\calA$. Both methods are based on the fact that this minimization problem is equivalent to solving
\begin{equation} \label{eq:maxstiefel}
\max\big\{ \| \calA \times_1 U^T \cdots \times_d U^T\|:\ U \in \R^{n\times r} \text{ with } U^T U = I_r \big\}
\end{equation}
and setting $\calS = \calA \times_1 U^T \cdots \times_d U^T$; see, for example,~\cite{LathauwerLowrank2000}.

\begin{remark}
For symmetric tensors, there is numerical evidence (see, e.g.,~\cite{IshtevaJac2013}) that the best (unstructured) approximation of multilinear rank $r$ can usually be chosen symmetric. For $d = 2$ and general $r$, this follows from the spectral decomposition. For general $d$ and $r = 1$, this property has recently been shown by Friedland~\cite{Friedland2013}. For general $d$ and $r$, this question remains open.

For antisymmetric tensors, we will observe the analogous phenomenon below; it appears that the best (unstructured) approximation of multilinear rank $r$ can usually be chosen antisymmetric. For $d = 2$ and even $r$, this property follows from the real Schur decomposition. For $r = d$ and general $d$, we will see in Section~\ref{sec:rankd} that it is actually the unstructured rank-$1$ approximation that gives an antisymmetric multilinear rank-$d$ approximation.
\end{remark}

To simplify the presentation, we will consider the case $d = 3$ for the rest of this section; all developments extend in a relatively straightforward manner to general $d > 3$.

\subsection{HOOI}

The higher-order orthogonal iteration (HOOI) introduced in~\cite{SaadHOOI2006} is a popular approach to the best low multilinear rank approximation of a general tensor. It consists of applying alternating least squares (ALS) to the unstructured variant of the maximization problem~\eqref{eq:maxstiefel}:
\[
 \max\big\{ \| \calA \times_1 U_1^T \times_2 U_2^T \times_3 U_3^T\|:\ U_\mu \in \R^{n\times r} \text{ with } U_\mu^T U_\mu = I_r, \mu = 1,2,3 \big\}.
\]
One step of the method optimizes a single factor $U_\mu$ while keeping the other two factors fixed. The resulting optimization problem admits a straightforward solution by the SVD; see Algorithm~\ref{alg:HOOI}.

\begin{algorithm}[H] \small
\caption{HOOI for multilinear rank-$(r,r,r)$ approximation}
\label{alg:HOOI}
\begin{algorithmic}
\STATE Apply Algorithm~\ref{alg:thosvd} to choose initial factors $U_1 = U_2 = U_3 = U$.
\REPEAT
\STATE $\calX=\calA\times_2U_2^T\times_3U_3^T$
\STATE Compute matrix $U_1 \in \R^{n \times r}$ containing the leading $r$ left singular vectors of $\mathbf{X}_{(1)}$.
\STATE $\calY=\calA\times_1U_1^T\times_3U_3^T$
\STATE Compute matrix $U_2 \in \R^{n \times r}$ containing the leading $r$ left singular vectors of $\mathbf{Y}_{(2)}$.
\STATE $\calZ=\calA\times_1U_1^T\times_2U_2^T$
\STATE Compute matrix $U_3 \in \R^{n \times r}$ containing the leading $r$ left singular vectors of $\mathbf{Z}_{(3)}$.
\UNTIL convergence
\STATE $\calS=\calZ\times_3U_3^T$
\STATE Return approximation $\calS\times_1U_1\times_2U_2\times_3U_3$.
\end{algorithmic}
\end{algorithm}

Note that the iterates of Algorithm~\ref{alg:HOOI} are \emph{not} antisymmetric.
However, similarly as in the symmetric case, we have observed that in most of the cases Algorithm~\ref{alg:HOOI} converges towards an antisymmetric approximation; see Section~\ref{sec:approxnumexp} below. To antisymmetrize the output of Algorithm~\ref{alg:HOOI}, one could set all factors $U_\mu$ to be equal to one of them. Here we choose the factor $U_\mu$ for which~\eqref{eq:maxstiefel} gives the biggest value.

A simple antisymmetric variant of Algorithm~\ref{alg:HOOI} consists of setting \emph{all} factors to the factor that has been obtained from the SVD in one step. In the symmetric case, this variant has been observed to suffer from convergence problems~\cite{IshtevaJac2013} and we observed similar difficulties in the antisymmetric case.

\subsection{Jacobi algorithm}

In contrast to HOOI, the Jacobi algorithm proposed for symmetric tensors in~\cite{IshtevaJac2013} preserves structure, that is, all iterates stay symmetric. In this section, we develop a variant of this algorithm for antisymmetric tensors.

It will be convenient to rewrite the maximization problem~\eqref{eq:maxstiefel} as
\[
 \max\big\{ f(Q):\ Q \in \R^{n\times n} \text{ with } Q^T Q = I_n \big\},
\]
where
\begin{equation} \label{eq:deff}
f(Q) = \|\calA\times_1{M}{Q}^T\times_2{M}{Q}^T\times_3{M}{Q}^T\|^2, \qquad
M =  \begin{pmatrix}
I_r & 0 \\
0 & 0 \\
\end{pmatrix}.
\end{equation}
We will denote a Givens rotation acting on rows/columns $i$ and $j$ by
\[
 R(i,j,\phi) = \begin{gmatrix}[b]
 I &  &  &  & \\
  & \cos\phi &  & -\sin\phi & \\
  &  & I &  &    \\
  & \sin\phi &  & \cos\phi& \\
  &  &  &  & I
 \colops
  \mult{1}{i}
  \mult{3}{j}
 \rowops
  \mult{1}{i}
  \mult{3}{j}
\end{gmatrix}.
\]
In the following, $(i,j)$ will be called a pivot pair.

The main idea of the Jacobi algorithm is to repeatedly apply Givens rotations that increase the norm of the $(1:r,1:r,1:r)$ subtensor.
For this purpose, it will be sufficient to consider rotations corresponding to the pivot pairs
\begin{equation}\label{pivotpairs}
\begin{array}{ccccc}
(1,r+1), & (1,r+2), & \ldots & (1,n) \\
(2,r+1), & (2,r+2), & \ldots & (2,n) \\
\vdots & \vdots & & \vdots \\
(r,r+1), & (r,r+2), & \ldots & (r,n).
\end{array}
\end{equation}

In every iteration of the Jacobi algorithm, we choose a pivot pair that produces a direction of sufficiently strong descent.
Letting
$$d_{ij}= \frac{\partial}{\partial \phi} R(i,j,\phi) \Big|_{\phi=0}=\begin{gmatrix}[b]
 I &  &  &  & \\
  & 0 &  & -1 & \\
  &  & I &  &    \\
  & 1 &  & 0& \\
  &  &  &  & I
 \colops
  \mult{1}{i}
  \mult{3}{j}
 \rowops
  \mult{1}{i}
  \mult{3}{j}
\end{gmatrix},$$
we can always find pivot pairs $(i,j)$ among~\eqref{pivotpairs} such that
\begin{equation}\label{Jac condition}
|\langle\text{grad}f(I),d_{ij}\rangle|\geq\epsilon\|\text{grad}f(I)\|
\end{equation}
holds, provided that $0 < \epsilon < 2/n$; see~\cite[Lemma 5.2]{IshtevaJac2013}.

Once a pivot pair $(i,j)$ satisfying~\eqref{Jac condition} is determined, we choose the
rotation angle $\phi$ that maximizes $f$, i.e., we solve
\begin{equation} \label{eq:maxphi}
\max\big\{ f\big(R(i,j,\phi) \big):\ \phi \in [0,\pi] \big\},
\end{equation}
Because of $1\leq i\leq r<j\leq n$, the tensor $\calB = \calA \times_1 R(i,j,\phi)^T \times_2 R(i,j,\phi)^T \times_3 R(i,j,\phi)^T$ differs from $\calA$ within the subtensor $(1:r,1:r,1:r)$ only in the three slices $(i,1:r,1:r)$, $(1:r,i,1:r)$, and $(1:r,1:r,i)$. Because of antisymmetry, these slices have identical norms and we can ignore their intersections. Hence,~\eqref{eq:maxphi} becomes equivalent to maximizing
\begin{eqnarray}
\|\calB(i,1:r,1:r)\|^2 & = & \sum_{p,q = 1}^r \calB(i,p,q)^2 = \sum_{\substack{p,q=1\\p,q\neq i}}^r \calB(i,p,q)^2 \nonumber \\
&=& \sum_{\substack{p,q=1\\p,q\neq i}}^r \big( \cos \phi \calA(i,p,q)+\sin\phi \calA(j,p,q) \big)^2 =:\psi(\phi). \label{psi}
\end{eqnarray}
Let
$$\alpha_1=\sum_{\substack{p,q=1\\p,q\neq i}}^r\calA(i,p,q)^2, \ \alpha_2=\sum_{\substack{p,q=1\\p,q\neq i}}^r\calA(i,p,q)\calA(j,p,q),
\ \alpha_3=\sum_{\substack{p,q=1\\p,q\neq i}}^r\calA(j,p,q)^2.$$
Then the derivative of~\eqref{psi} takes the form
$$\psi^\prime(\phi) = -2 \alpha_1 \cos\phi \sin\phi +2\alpha_2 (\cos^2 \phi-\sin^2 \phi)+2\alpha_3\cos\phi\sin\phi.$$
In order to find the zeros of this function, we divide it by $\cos^2 \phi$
and solve the resulting quadratic equation in $t=\sin\phi / \cos\phi$:
$$\alpha_2 t^2 +(\alpha_1-\alpha_3)t-\alpha_2=0.$$
Among the two solutions to this equation, we choose the one that maximizes~\eqref{psi}.
Algorithm~\ref{alg:jacobi} summarizes the described procedure.

\begin{algorithm}[H] \small
\caption{Jacobi algorithm for antisymmetric multilinear rank-$r$ approximation}
\label{alg:jacobi}
\begin{algorithmic}
\STATE Apply Algorithm~\ref{alg:thosvd} to choose initial factor $U \in \R^{n\times r}$.
\STATE Choose $U_\perp$ such that $Q = [U,U_\perp]$ is orthogonal.
\STATE Set $\calA_1 = \calA \times_1 Q^T \times_2 Q^T \times_3 Q^T$.
\REPEAT
\STATE {Choose $(i,j)$ according to~\eqref{pivotpairs} and~\eqref{Jac condition}.}
\STATE {Determine $\phi$ that maximizes~\eqref{psi}.}
\STATE $Q_{k+1}=Q_kR(i,j,\phi)$
\STATE $\calA_{k+1}=\calA_k\times_1R(i,j,\phi)^T\times_2R(i,j,\phi)^T\times_3R(i,j,\phi)^T$
\UNTIL {convergence}
\STATE $U=Q_k(:,1:r)$
\STATE Return approximation $\calA \times_1UU^T \times_2 UU^T \times_3 UU^T$.
\end{algorithmic}
\end{algorithm}

For choosing the pivot pair $(i,j)$ in Algorithm~\ref{alg:jacobi}, we traverse the list~\eqref{pivotpairs} cyclically. For each pair, the condition~\eqref{Jac condition} is checked. If $(i,j)$ does not fulfill this condition, it is skipped and the algorithm continues checking the next pair.

Although observed in practice, it cannot be guaranteed that Algorithm~\ref{alg:jacobi} produces the minimum of the function in~\eqref{eq:deff}.
The proof of a weaker convergence result for symmetric tensors~\cite[Theorem 5.4]{IshtevaJac2013} directly extends to antisymmetric tensors, resulting in Theorem~\ref{Jacobi cvgtm}.

\begin{theorem}\label{Jacobi cvgtm}
Let $(Q_k)$ be the sequence of orthogonal matrices generated by Algorithm~\ref{alg:jacobi} applied to an antisymmetric tensor $\calA\in\R^{n\times n\times n}$.
Then every accumulation point of $(Q_k)$ is a stationary point of the function $f$ from~\eqref{eq:deff}.
\end{theorem}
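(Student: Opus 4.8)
The plan is to mimic the structure of the convergence proof for the symmetric Jacobi algorithm in \cite[Theorem 5.4]{IshtevaJac2013}, checking at each step that the passage from symmetry to antisymmetry changes nothing essential. The whole argument rests on three facts: the iteration lives on a compact set (the orthogonal group $O(n)$), the objective $f$ is smooth, and each sweep of the inner loop produces a monotone increase in $f$ that is quantitatively controlled by the size of the projected gradient.

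First I would record the basic setup. Since $Q_k \in O(n)$ for all $k$ and $O(n)$ is compact, the sequence $(Q_k)$ has accumulation points; let $Q_\star$ be one of them, with $Q_{k_m}\to Q_\star$. Because each step replaces $Q_k$ by $Q_k R(i,j,\phi)$ with $\phi$ chosen to maximize $f$ over the one-parameter subgroup through the chosen pivot, and the identity rotation $\phi=0$ is always admissible, we have $f(Q_{k+1})\ge f(Q_k)$; combined with the fact that $f$ is bounded above on $O(n)$, the sequence $f(Q_k)$ converges, hence $f(Q_{k+1})-f(Q_k)\to 0$. Here antisymmetry enters only through the derivation of $\psi(\phi)$ in~\eqref{psi}: because $1\le i\le r<j\le n$, the rotation only perturbs the three slices indexed by $i$, these slices have equal norm by antisymmetry, and their mutual intersections vanish by~\eqref{notzero}, so the exact one-variable maximization described before Algorithm~\ref{alg:jacobi} is valid and $f(Q_{k+1})-f(Q_k) = 3\bigl(\psi(\phi_k)-\psi(0)\bigr)\ge 0$.

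Next I would quantify the gain. Writing $g_k := \mathrm{grad}\, f(Q_k)$ (the Riemannian gradient on $O(n)$, equivalently the gradient of $Q\mapsto f(Q_k Q)$ at $I$), the pivot selection rule guarantees a pair $(i,j)$ with $|\langle g_k, d_{ij}\rangle|\ge \epsilon\|g_k\|$ by \cite[Lemma 5.2]{IshtevaJac2013}, whose proof is purely about the geometry of the $d_{ij}$ spanning the relevant subspace and is insensitive to (anti)symmetry. Along the chosen geodesic, an exact line search yields an increase bounded below by a continuous, monotone function of the directional derivative — concretely, since $\psi$ is a trigonometric polynomial of degree $2$ with coefficients continuous in $\calA_k$ (hence bounded on the compact orbit), one gets $f(Q_{k+1})-f(Q_k)\ge \omega\bigl(|\langle g_k,d_{ij}\rangle|\bigr)\ge \omega(\epsilon\|g_k\|)$ for some continuous $\omega$ with $\omega(0)=0$ and $\omega(t)>0$ for $t>0$. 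Since the left side tends to $0$, it follows that $\|g_k\|\to 0$. Finally, $\mathrm{grad}\, f$ is continuous, so $0 = \lim_m \|g_{k_m}\| = \|\mathrm{grad}\, f(Q_\star)\|$, i.e.\ $Q_\star$ is a stationary point of $f$, as claimed.

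I expect the main obstacle to be making the lower bound $f(Q_{k+1})-f(Q_k)\ge \omega(\epsilon\|g_k\|)$ precise and uniform in $k$: one must argue that the coefficients $\alpha_1,\alpha_2,\alpha_3$ of $\psi$ stay in a compact set (true, since $\calA_k$ is an orthogonal change of basis of the fixed $\calA$), that the gain of an exact one-variable maximization of a degree-two trigonometric polynomial is bounded below in terms of its derivative at $0$, and that no degeneracy (e.g.\ $\alpha_1=\alpha_3$, $\alpha_2=0$ simultaneously with nonzero directional derivative) can occur — exactly the estimate carried out in \cite[Theorem 5.4]{IshtevaJac2013}. Since that estimate only uses the form of $\psi$ and the boundedness of its coefficients, and we have already checked that antisymmetry reproduces precisely the same $\psi$, the argument transfers verbatim; the remainder is the standard accumulation-point/continuity wrap-up sketched above.
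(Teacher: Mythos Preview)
Your proposal is correct and follows exactly the approach the paper takes: the paper does not give a standalone proof but simply states that the argument of \cite[Theorem~5.4]{IshtevaJac2013} ``directly extends to antisymmetric tensors,'' and your write-up is precisely a verification of that extension, checking that the one place antisymmetry enters (the reduction to the scalar function $\psi$ in~\eqref{psi}) reproduces the same trigonometric structure used in the symmetric case. Nothing further is needed.
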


\subsection{Numerical Experiments}\label{sec:approxnumexp}

The algorithms described in this paper have been implemented and tested in Matlab version 7.11.

In our first set of experiments, we study the approximation error obtained by truncated HOSVD, HOOI, and the Jacobi algorithm. The latter two algorithms are iterative; they are considered converged when the norm of the gradient of the objective function is $10^{-10}$ or below. We have chosen $\epsilon = 1/(10 n)$ in the condition~\eqref{Jac condition} of the Jacobi algorithm.
We tested the algorithms with random tensors generated by applying antisymmetrizer from~\eqref{eq:antisymmetrizer} to tensors with uniformly distributed random entries from the interval $[0,1]$.
Figure~\ref{fig:mlr10} shows that HOOI and the Jacobi algorithm always improve upon the approximation obtained from the HOSVD.
In many cases, HOOI and the Jacobi algorithm result in the same (antisymmetric) approximation. In rare cases when the error of the Jacobi algorithm is greater than the one of HOOI, it is observed that the tensor produced by HOOI is not antisymmetric. On the other hand, when the error of HOOI is greater, the tensor produced by HOOI is antisymmetric. Although providing little evidence, these observations do at least not contradict a conjecture that the best (unstructured) approximation of multilinear rank $(r,r,r)$
to a generic antisymmetric tensor can always be chosen antisymmetric for $r\ge 3$.

\begin{figure}[h]
    \centering
    \begin{subfigure}[b]{0.45\textwidth}
        \includegraphics[width=\textwidth]{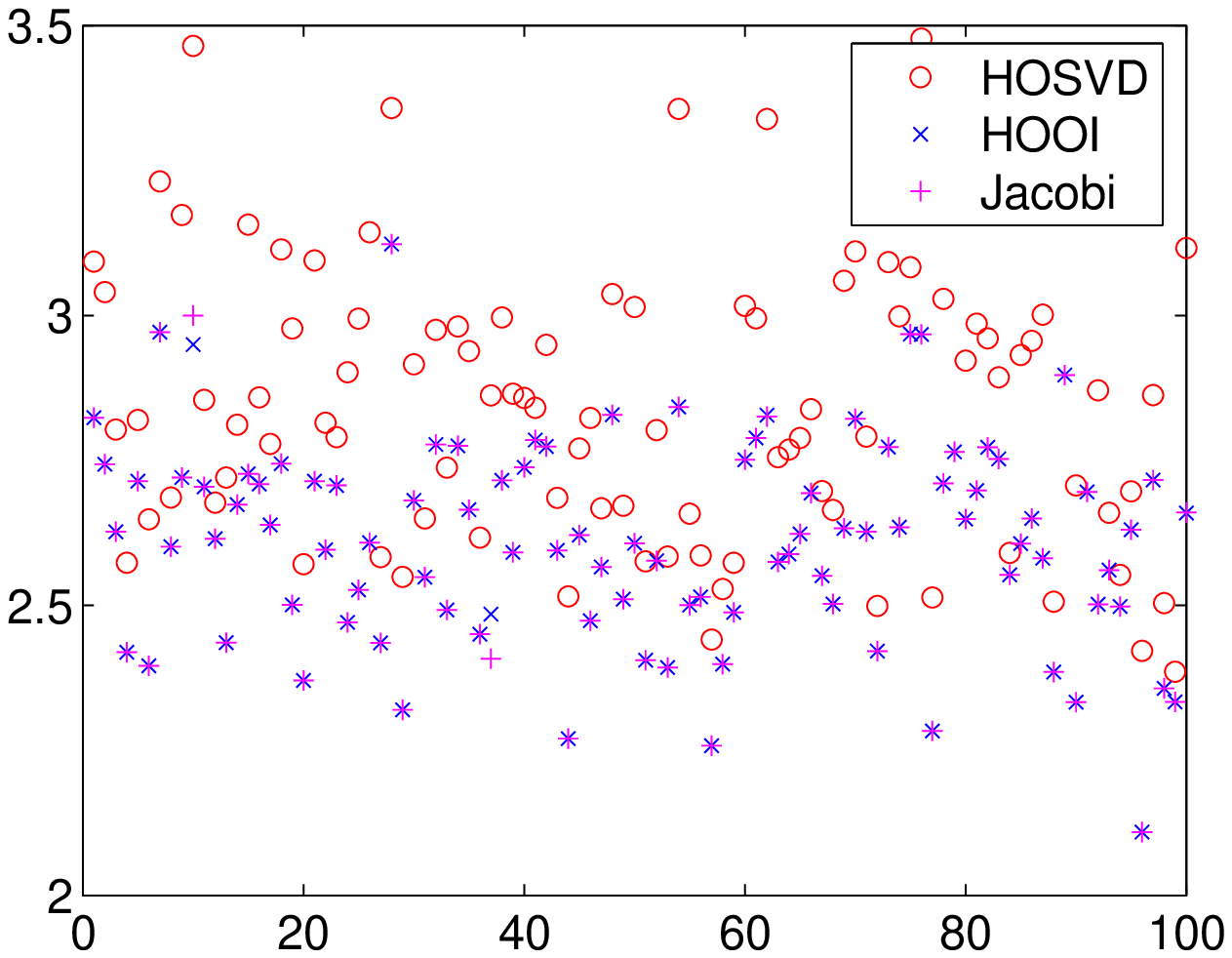}
        \caption{Multilinear rank $3$}
    \end{subfigure}
    \begin{subfigure}[b]{0.45\textwidth}
        \includegraphics[width=\textwidth]{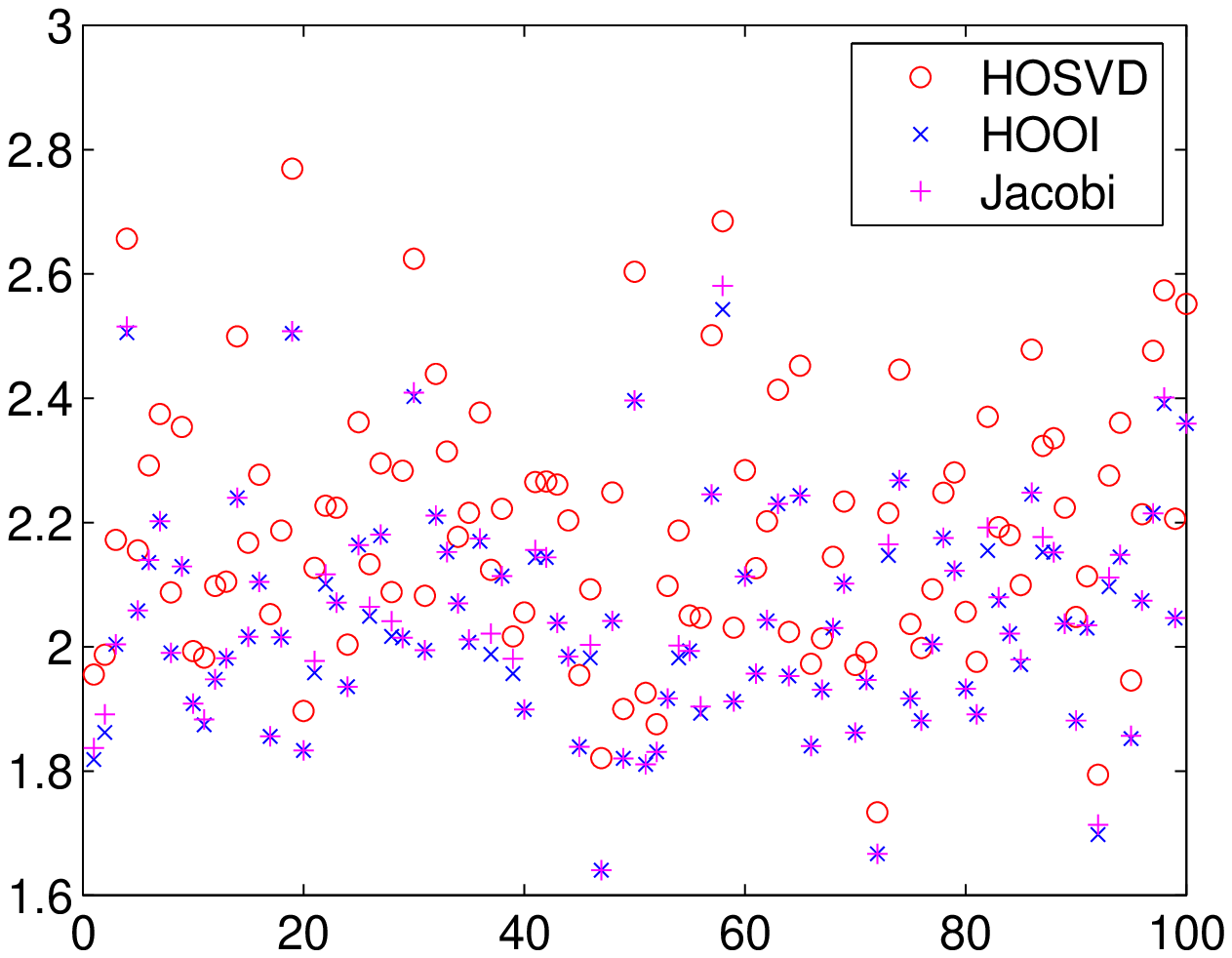}
        \caption{Multilinear rank $6$}
    \end{subfigure}
    \caption{Approximation error of low multilinear rank approximation to $100$ random antisymmetric $10\times10\times10$ tensors.}\label{fig:mlr10}
\end{figure}

Figure~\ref{fig:mlgrad} yields insights into the convergence behavior of HOOI and Jacobi algorithm for a representative run with a random antisymmetric tensor. To emphasize the benefits from initializing with the truncated HOSVD we compare with using no initialization, that is, instead of using Algorithm~\ref{alg:thosvd} we set $U = \Big[{I_r \atop 0}\Big]$
and $Q = I_n$ in Algorithms~\ref{alg:HOOI} and~\ref{alg:jacobi}, respectively. Apart from the approximation error we also show the norm of the gradient of the objective function.

\begin{figure}[h]
    \centering
    \begin{subfigure}[b]{0.45\textwidth}
        \includegraphics[width=\textwidth]{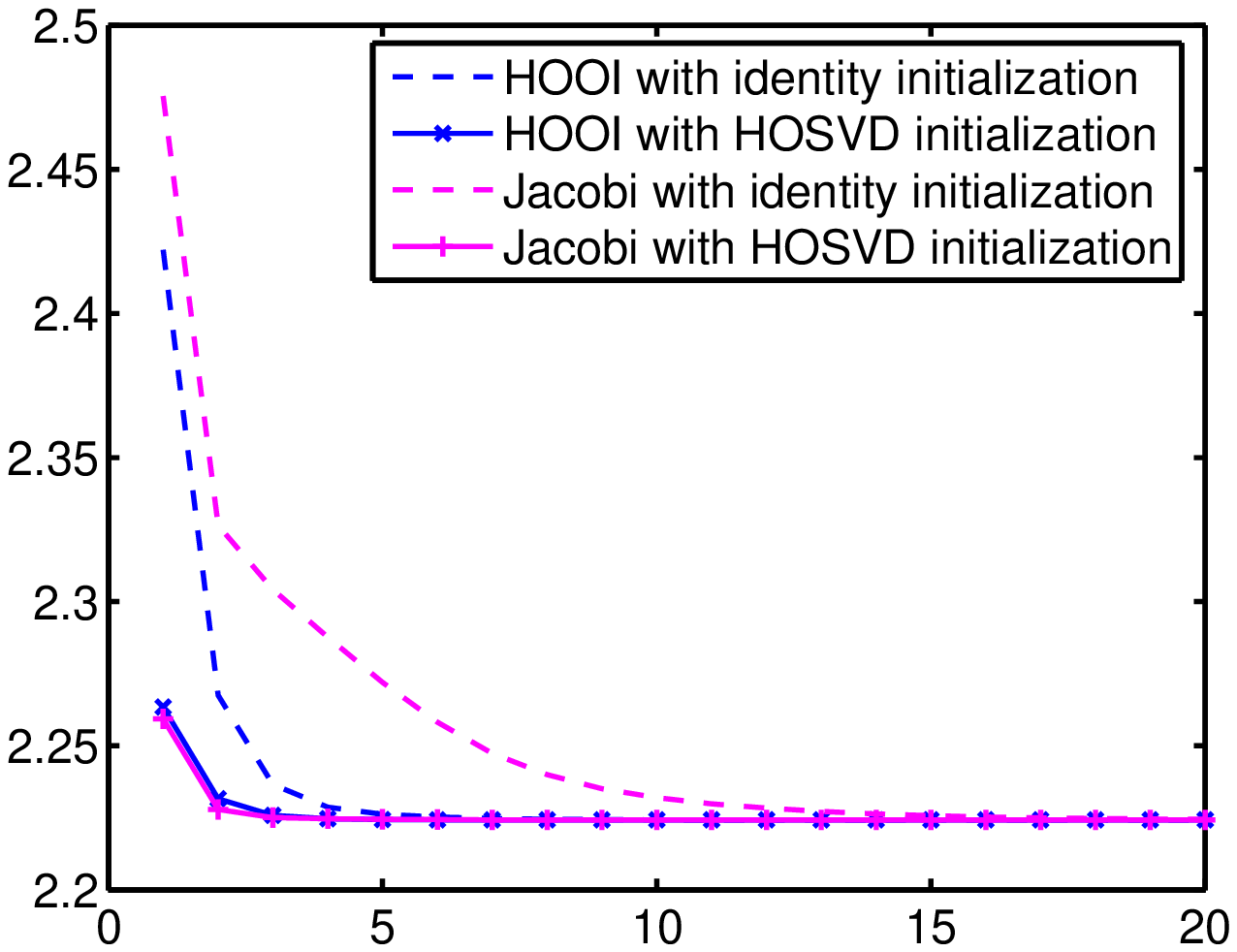}
        \caption{Approximation error}
    \end{subfigure}
    \begin{subfigure}[b]{0.45\textwidth}
        \includegraphics[width=\textwidth]{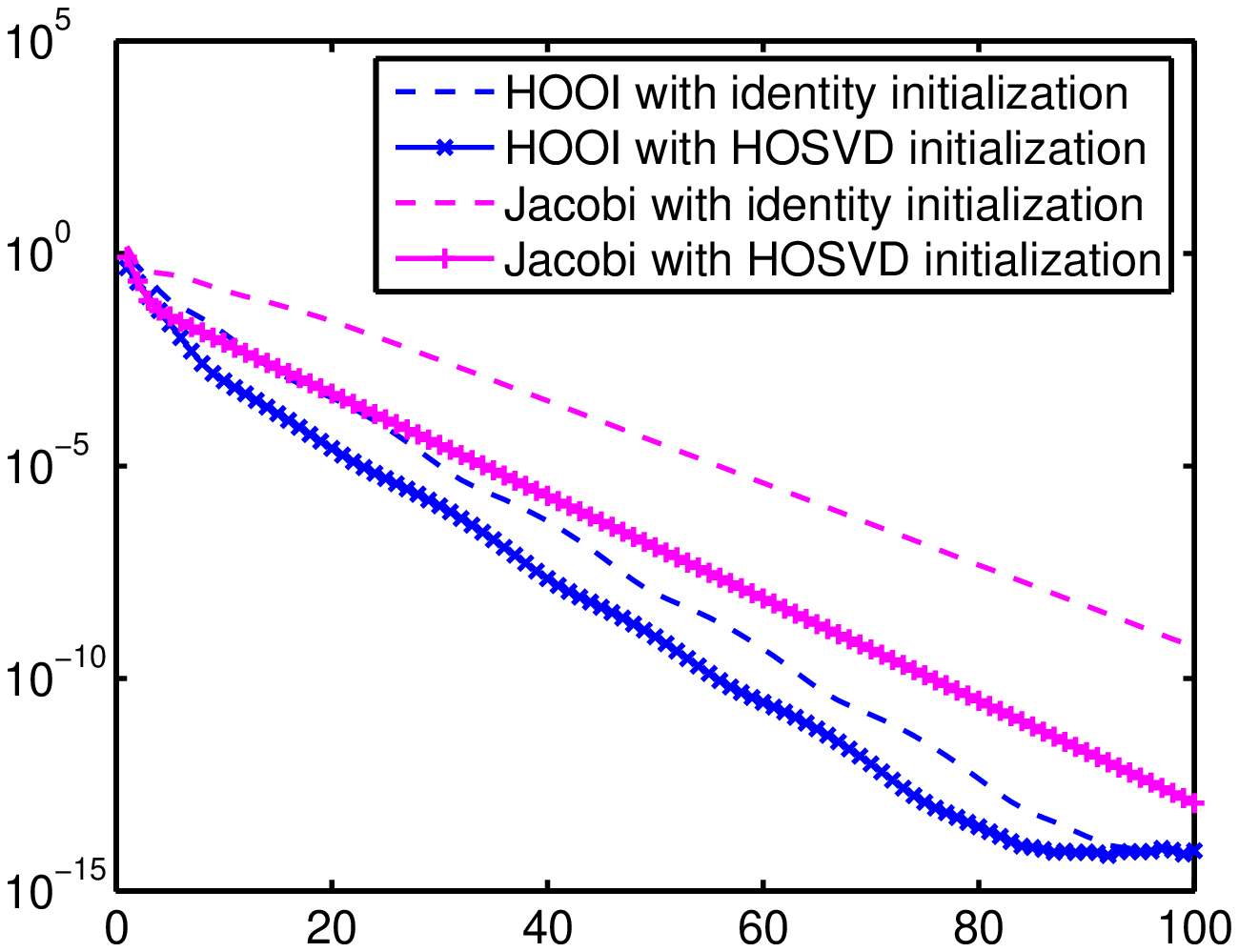}
        \caption{Norm of gradient of objective function}
    \end{subfigure}
    \caption{Convergence behavior of HOOI and Jacobi algorithm for multilinear rank-$6$ approximation of random antisymmetric $10\times10\times10$ tensor.}\label{fig:mlgrad}
\end{figure}

We have also considered antisymmetric tensors for which the matricizations are known to exhibit rapid singular value decays. To construct such a tensor, consider the function
\[ f(x,y,z) = \exp( -\sqrt{x^2 + 2 y^2 + 3z^2})\]
on $[0,1]^3$. Then we let $\calX$ contain its discretization:
\[
 \calX(i_1,i_2,i_3) = f\big(\xi_{i_1}, \xi_{i_2}, \xi_{i_3}\big), \quad i_\mu = 1,\ldots, n,
\]
where $\xi_i = (i-1)/(n-1)$, and set $\calA = \antisymm(\calX)$.
Figure~\ref{fig:Fcvg} shows the obtained results for $n = 20$. It reveals that the HOSVD gives an excellent initial approximation. This is also an example where the Jacobi algorithm with no initialization fails to converge to a global optimum.

\begin{figure}[h]
    \centering
    \begin{subfigure}[b]{0.45\textwidth}
        \includegraphics[width=\textwidth]{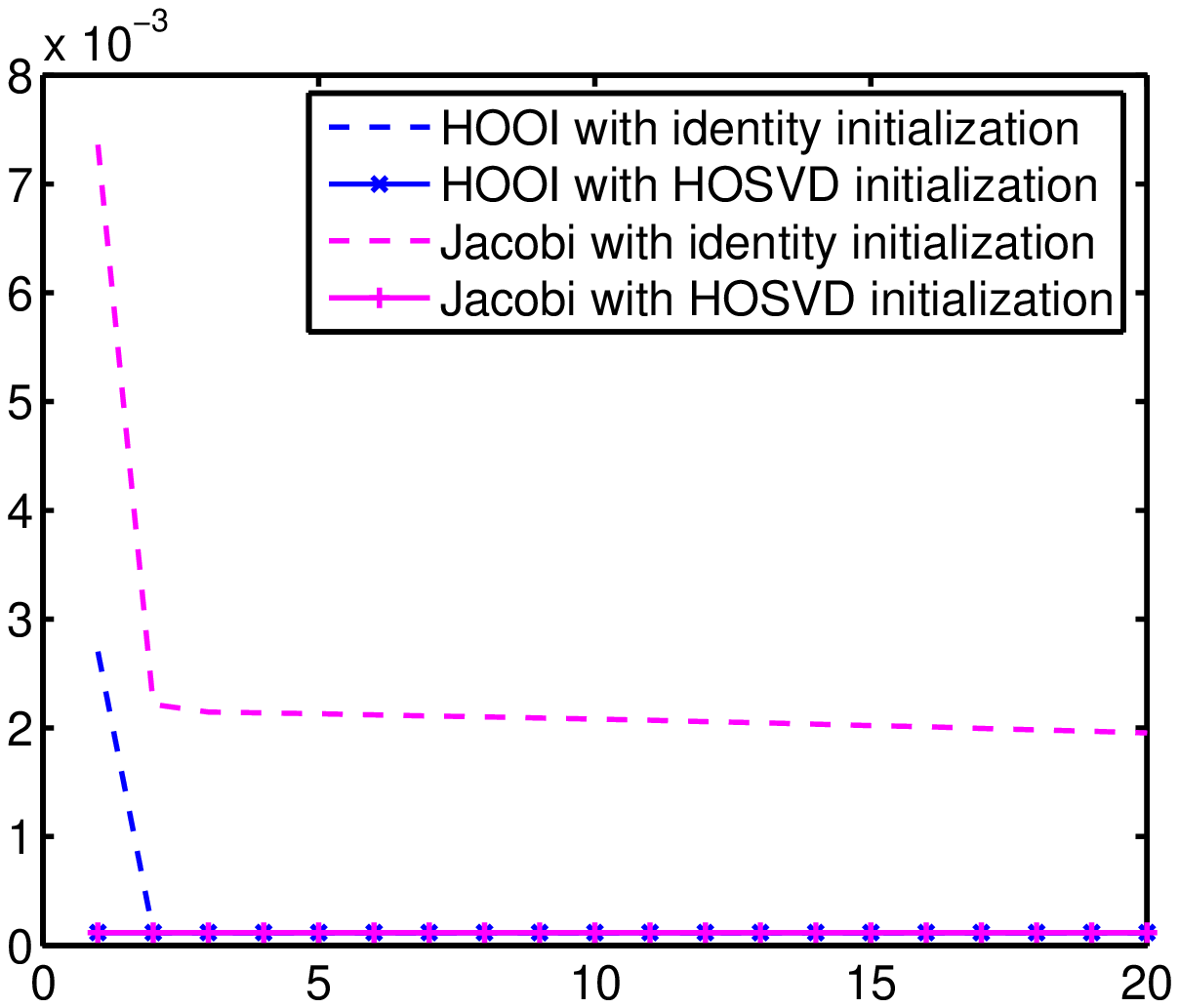}
        \caption{Approximation error}
    \end{subfigure}
    \begin{subfigure}[b]{0.45\textwidth}
        \includegraphics[width=\textwidth]{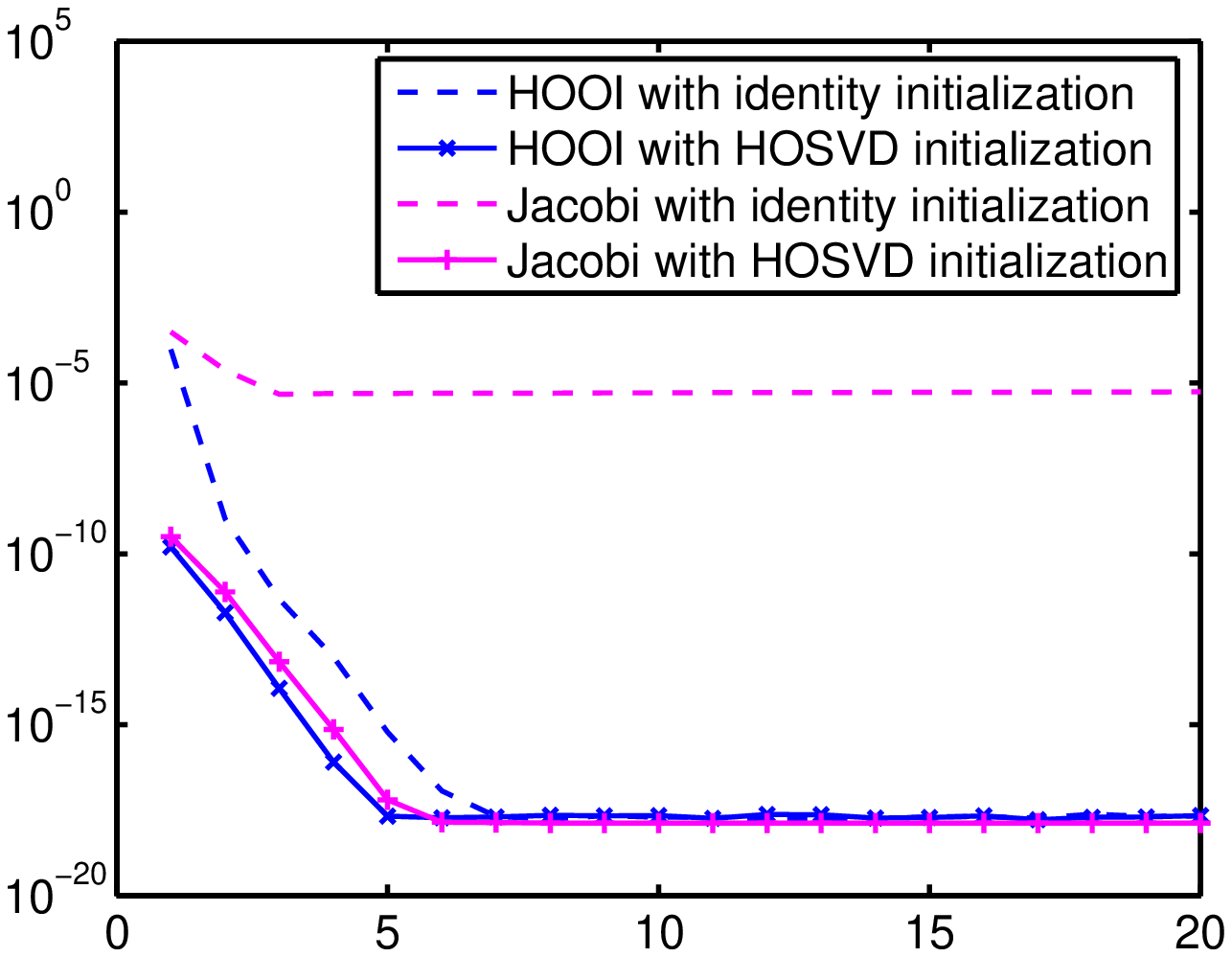}
        \caption{Norm of gradient of objective function}
    \end{subfigure}
    \caption{Convergence behavior of HOOI and Jacobi algorithm for multilinear rank-$7$ approximation of function-related tensor.}\label{fig:Fcvg}
\end{figure}

Finally, we report results for the antisymmetric ground state of a Schr\"odinger eigenvalue problem, which motivated our study of antisymmetric tensors. Following the example from~\cite[Sec. 5.4]{Beylkin2005}, we consider one-dimensional $2\pi$-periodic variables $x_1,\ldots,x_d \in \R$ and the Hamiltonian
\begin{equation} \label{eq:hamiltonian}
\mathbb H = -\frac12 \sum_{\mu = 1}^d \frac{\partial^2}{\partial x_\mu^2} + c_v \sum_{\mu = 1}^d \cos(2\pi x_\mu) + c_w \sum_{\mu = 1}^{d-1} \sum_{\nu = \mu+1}^{d} \cos(2\pi(x_\mu-x_\nu)),
\end{equation}
with $c_v = 100$, $c_w = 5$. The goal is to compute the smallest (negative) eigenvalue of $\mathbb H$ with an antisymmetric eigenfunction. After discretizing each variable $x_\mu$ on a uniform grid with $n$ grid points in $[0,2\pi)$ and approximating each $\partial^2/ \partial x_\mu^2$ with central finite differences, $\mathbb H$ becomes an $n^d \times n^d$ matrix, which can be reinterpreted as a linear operator $\mathcal H: \R^{n\times \cdots \times n} \to \R^{n\times \cdots \times n}$. Because $\mathbb H$ is invariant under permutations of the variables, $\mathcal H$ commutes with the antisymmetrizer $\mathcal A$. Hence, computing the most negative eigenvalue of $\mathcal H$ with an antisymmetric eigenvector is equivalent to computing the smallest eigenvalue of $\mathcal A \circ \mathcal H$. We used {\sc Matlab}'s {\tt eigs} for the latter and then applied the HOOI and Jacobi algorithm to the resulting (antisymmetric) eigenvector. The obtained results for $d = 3$ and $n = 50$ shown in Figure~\ref{fig:beylkin} are qualitatively similar to the ones obtained for the function-related tensor above.

\begin{figure}[h]
    \centering
    \begin{subfigure}[b]{0.45\textwidth}
        \includegraphics[width=\textwidth]{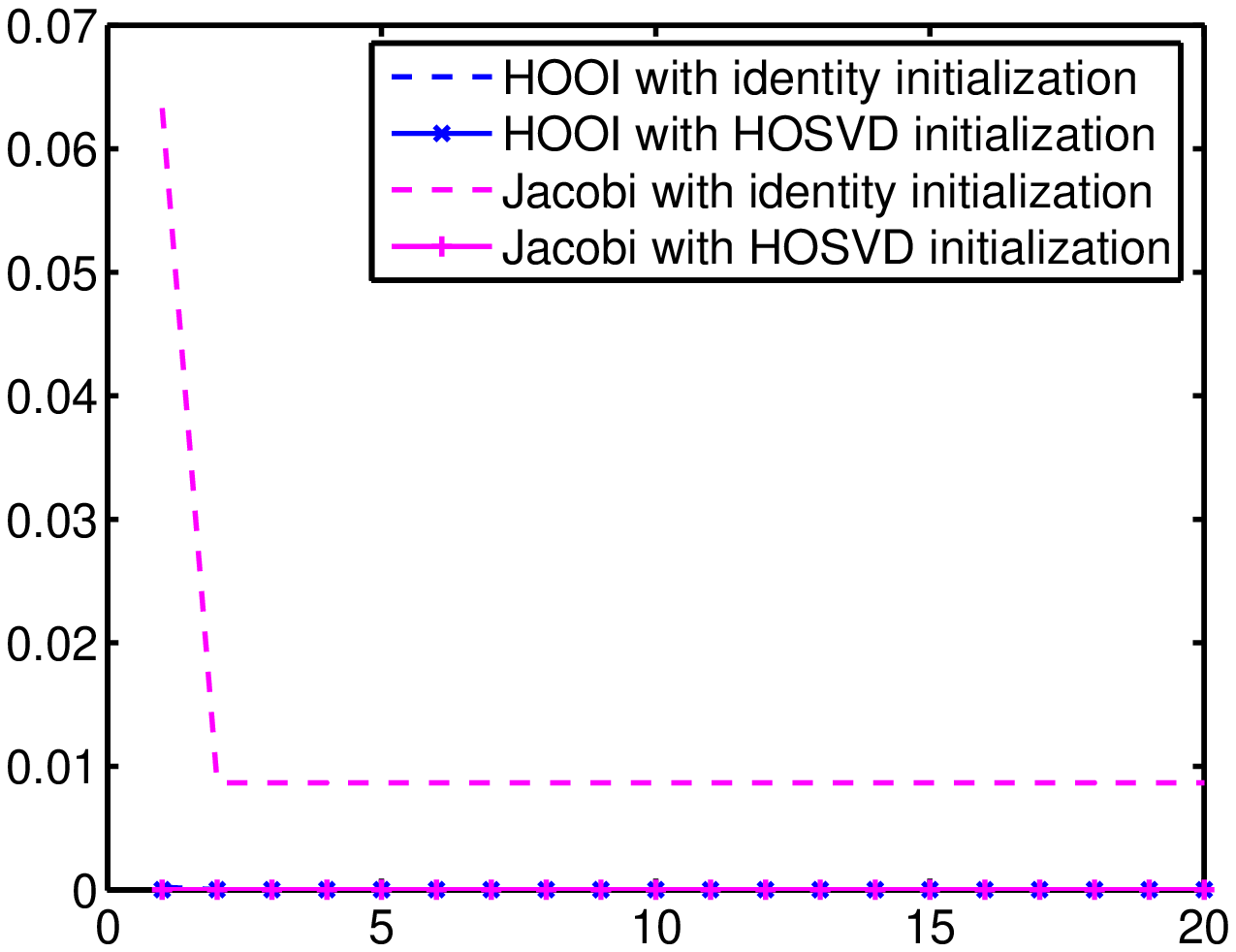}
        \caption{Approximation error}
    \end{subfigure}
    \begin{subfigure}[b]{0.45\textwidth}
        \includegraphics[width=\textwidth]{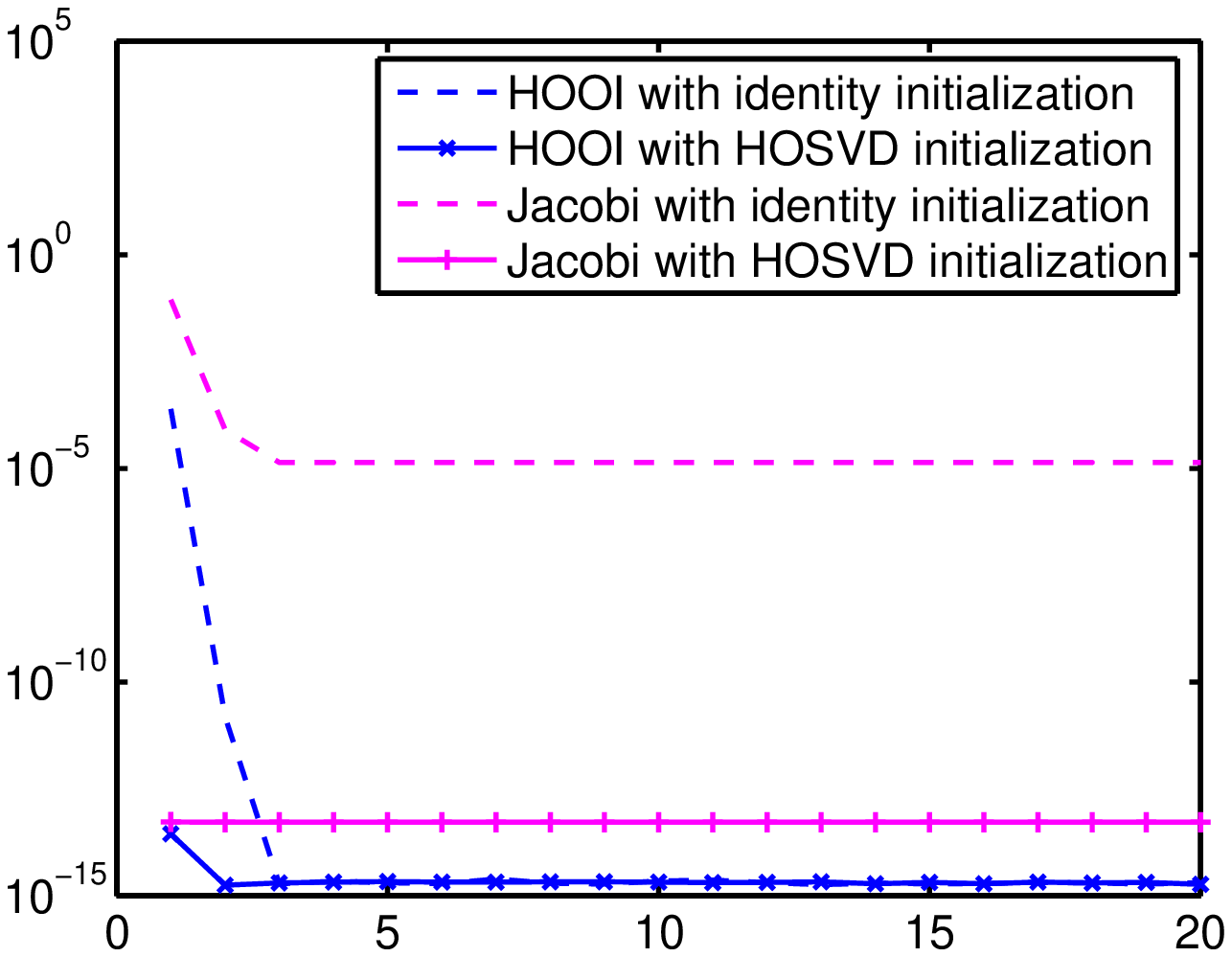}
        \caption{Norm of gradient of objective function}
    \end{subfigure}
    \caption{Convergence behavior of HOOI and Jacobi algorithm for multilinear rank-$7$ approximation of antisymmetric ground state for the discretized Hamiltonian~\eqref{eq:hamiltonian} with $d = 3$ and $n = 50$.}\label{fig:beylkin}
\end{figure}

\section{Multilinear rank-$d$ approximation}\label{sec:rankd}

Antisymmetric tensors of order $d$ and multilinear rank $d$ have the very particular structure~\eqref{eq:rank1}. As we will discuss in this section, this simplifies the approximation with such tensors significantly.

The following basic lemma plays a key role; it extends the well known fact that $u^T A u = 0$ always holds for a skew-symmetric matrix $A$.

\begin{lemma}\label{lemma:skewscalar}
Let $\calA\in\R^{n\times \cdots\times n}$ be an antisymmetric tensor of order $d \ge 2$ and $u\in \R^n$.
Then $\calA \times_\mu u \times_\nu u= 0$ for any $1 \le \mu < \nu \le d$.
\end{lemma}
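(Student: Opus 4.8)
The plan is to prove the statement entrywise, reducing it to the elementary matrix fact that $u^T A u = 0$ for skew-symmetric $A$, now applied simultaneously in the two contracted modes. Fix $\mu < \nu$ and fix arbitrary values for the remaining $d-2$ indices; denote the corresponding entry of the contracted tensor by
\[
 S := (\calA \times_\mu u \times_\nu u)(\ldots) = \sum_{k=1}^n \sum_{\ell=1}^n \calA(\ldots, k, \ldots, \ell, \ldots)\, u_k u_\ell,
\]
where $k$ occupies position $\mu$, $\ell$ occupies position $\nu$, and every other position carries its fixed index.

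First I would invoke antisymmetry with respect to the transposition that swaps positions $\mu$ and $\nu$, which gives $\calA(\ldots, k, \ldots, \ell, \ldots) = -\calA(\ldots, \ell, \ldots, k, \ldots)$ for all $k, \ell$. Next I would relabel the summation variables $k \leftrightarrow \ell$ in the defining sum; since $u_k u_\ell = u_\ell u_k$, this yields $S = \sum_{k,\ell} \calA(\ldots, \ell, \ldots, k, \ldots)\, u_k u_\ell = -S$, so $2S = 0$ and hence $S = 0$. Because the values of the remaining $d-2$ indices were arbitrary, every entry of $\calA \times_\mu u \times_\nu u$ vanishes, which is exactly the claim.

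I do not expect a genuine obstacle here: the whole content is the observation that contracting two modes of an object antisymmetric in those modes against the symmetric rank-one array $u \otimes u$ produces zero. The only point needing (minor) care is the index bookkeeping hidden in the $\times_\mu, \times_\nu$ notation — in particular, nothing need be assumed about whether the remaining free indices are mutually distinct, and for $d = 2$ the statement is literally $u^T A u = 0$. For general $d$ this lemma will serve as the workhorse in Section~\ref{sec:rankd}, where it forces the many cross terms arising from the Slater-determinant structure~\eqref{eq:rank1} to drop out.
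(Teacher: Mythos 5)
Your proof is correct and is essentially identical to the paper's: both write a generic entry of the contracted tensor as a double sum, use antisymmetry in the two contracted modes together with a relabeling of the summation indices to obtain $S=-S$, and conclude $S=0$. The paper merely streamlines the bookkeeping by assuming without loss of generality that $\mu=d-1$ and $\nu=d$.
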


\begin{proof}
Without loss of generality, we may assume that $\mu  = d-1$ and $\nu = d$. Then any entry of $\calB = \calA \times_\mu u \times_\nu u$ satisfies
\begin{eqnarray*}
\calB(i_1,\ldots, i_{d-2}) &=& \sum_{j, k = 1}^n \calA(i_1, \ldots, i_{d-2},j,k) u_j u_k \\
&=& -\sum_{j, k = 1}^n \calA(i_1, \ldots, i_{d-2},k,j) u_j u_k = -\calB(i_1,\ldots, i_{d-2}),
\end{eqnarray*}
which implies $\calB = 0$.
\end{proof}

The following theorem establishes an equivalence between the best antisymmetric multilinear rank-$d$ approximation and the best unstructured rank-$1$ approximation of an antisymmetric tensor.

\begin{theorem}\label{thm:antirank1}
Let $\calA \in \R^{n\times \cdots \times n}$ be an antisymmetric tensor of order $d$. Then
\begin{eqnarray}
&& \max\big\{ \| \calA \times_1 U^T \cdots \times_d U^T\|:\ U \in \R^{n\times d} \text{ \rm  with } U^T U = I_d \big\} \nonumber \\
&=& d! \max\big\{| \calA \times_1 u_1^T \cdots \times_d u_d^T|:\  [u_1,\ldots,u_d]^T [u_1,\ldots,u_d] = I_d \big\} \label{eq:firstequality} \\
&=& d! \max\big\{| \calA \times_1 v_1^T \cdots \times_d v_d^T|:\  \|v_1\| = \cdots = \|v_d\| = 1 \big\}. \label{eq:secondequality}
\end{eqnarray}
\end{theorem}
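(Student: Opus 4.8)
The plan is to prove the chain of equalities by establishing the two equalities separately, working in the direction that is easiest in each case, and relying on Lemma~\ref{lemma:skewscalar} together with the explicit form~\eqref{eq:tensord} of antisymmetric $d\times\cdots\times d$ tensors.

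First I would handle~\eqref{eq:secondequality}, the equality between the constrained (orthonormal) maximum and the unconstrained one. The inequality ``$\le$'' is trivial since every orthonormal tuple is in particular a tuple of unit vectors. For the reverse inequality, take unit vectors $v_1,\ldots,v_d$ attaining the right-hand side and apply Gram--Schmidt: write $v_1 = u_1$, $v_2 = c_{21} u_1 + c_{22} u_2$, and so on, with $[u_1,\ldots,u_d]$ orthonormal. Expanding $\calA \times_1 v_1^T \cdots \times_d v_d^T$ multilinearly gives a sum of terms $\calA$ contracted against various $u_k$'s, but by Lemma~\ref{lemma:skewscalar} every term in which some $u_k$ appears in two different modes vanishes. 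Hence only the term $c_{22}c_{33}\cdots c_{dd}\,\calA \times_1 u_1^T \cdots \times_d u_d^T$ survives, and since $|c_{\mu\mu}| \le 1$ we conclude $|\calA \times_1 v_1^T \cdots \times_d v_d^T| \le |\calA \times_1 u_1^T \cdots \times_d u_d^T|$, giving ``$\ge$''.

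Next I would prove~\eqref{eq:firstequality}. Fix $U = [u_1,\ldots,u_d] \in \R^{n\times d}$ with $U^TU = I_d$ and set $\calS = \calA \times_1 U^T \cdots \times_d U^T$. This $\calS$ is antisymmetric of order $d$ and size $d$, so by~\eqref{eq:tensord} it equals $\antisymm(\alpha\, e_1\otimes\cdots\otimes e_d)$ for the scalar $\alpha = d!\,\calS(1,2,\ldots,d) = d!\,\calA \times_1 u_1^T \cdots \times_d u_d^T$. A short computation with~\eqref{eq:antisymmetrizer} shows $\|\antisymm(\alpha\,e_1\otimes\cdots\otimes e_d)\| = |\alpha|/\sqrt{d!}$, because the $d!$ nonzero entries each equal $\pm\alpha/d!$. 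Therefore $\|\calS\| = \sqrt{d!}\;|\calA \times_1 u_1^T \cdots \times_d u_d^T|$. Taking the maximum over all such $U$ on both sides yields exactly~\eqref{eq:firstequality}, since $\sqrt{d!}\cdot d! / \sqrt{d!} = d!$ — wait, more carefully: the left-hand side of the theorem is $\max \|\calS\| = \sqrt{d!}\,\max |\calA\times_1 u_1^T\cdots\times_d u_d^T|$, whereas we want this to equal $d!\,\max|\cdots|$; so the normalization of $\antisymm$ must be reconciled, and I would double-check the constant by instead writing $\|\calS\|^2 = \sum_{\pi\in S_d}\calS(\pi(1),\ldots,\pi(d))^2 = d!\cdot(\alpha/d!)^2 = \alpha^2/d!$, hence $\|\calS\| = |\alpha|/\sqrt{d!} = \sqrt{d!}\,|\calA\times_1 u_1^T\cdots\times_d u_d^T|$, and note that the claimed factor in the theorem is $d!$, not $\sqrt{d!}$ — so I would re-examine whether the intended norm on the core is Frobenius or whether the factor $d!$ absorbs a different convention; in any case the argument structure is: reduce $\calS$ to the canonical form~\eqref{eq:tensord}, compute its Frobenius norm in terms of $\alpha$, and identify $\alpha$ with a multiple of the rank-$1$ contraction.

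The main obstacle I anticipate is precisely this bookkeeping of combinatorial constants — tracking the factor relating $\|\antisymm(\cdots)\|$ to $|\alpha|$ and reconciling it with the stated factor $d!$ on the right-hand sides. The conceptual content (Gram--Schmidt plus Lemma~\ref{lemma:skewscalar} kills the cross terms; every size-$d$ antisymmetric core is a scaled Slater determinant) is straightforward; the risk lies entirely in getting the normalization of the antisymmetrizer and of the Frobenius norm consistent throughout.
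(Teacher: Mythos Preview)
Your approach coincides with the paper's: for~\eqref{eq:firstequality} the paper also identifies the core $\calS=\calA\times_1 U^T\cdots\times_d U^T$ with the canonical form~\eqref{eq:tensord} and reads off its Frobenius norm, and for~\eqref{eq:secondequality} it likewise applies a QR decomposition to $[v_1,\ldots,v_d]$ and invokes Lemma~\ref{lemma:skewscalar} to kill all cross terms, leaving only the product $r_{11}\cdots r_{dd}$ of diagonal entries (each of modulus at most~$1$).

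Your hesitation about the constant is well founded and your computation is the careful one. With the normalization~\eqref{eq:antisymmetrizer} one has $\antisymm(e_1\otimes\cdots\otimes e_d)(1,\ldots,d)=1/d!$ and hence $\|\antisymm(e_1\otimes\cdots\otimes e_d)\|^2=1/d!$; consequently $\calS=\antisymm(\beta\,e_1\otimes\cdots\otimes e_d)$ with $\beta=d!\,\calS(1,\ldots,d)$ and
\[
\|\calS\|=\sqrt{d!}\,|\calS(1,\ldots,d)|=\sqrt{d!}\,\bigl|\calA\times_1 u_1^T\cdots\times_d u_d^T\bigr|.
\]
The factor on the right-hand sides should therefore be $\sqrt{d!}$, not $d!$. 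The paper's displayed computation reaches the stated $d!$ only through two slips: it writes $\calS=\antisymm(\alpha\,e_1\otimes\cdots\otimes e_d)$ with $\alpha=|\calS(1,\ldots,d)|$ instead of $\alpha=d!\,|\calS(1,\ldots,d)|$, and it evaluates $\|\antisymm(e_1\otimes\cdots\otimes e_d)\|^2$ as $(d!)^2$ rather than $1/d!$. Your argument structure is correct; only the constant in the statement needs repair, and the discrepancy you flagged is real rather than a convention issue.
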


\begin{proof}
Let $\alpha = | \calA \times_1 u_1^T \cdots \times_d u_d^T|$. Using~\eqref{eq:tensord},
\[
  \| \calA \times_1 U^T \cdots \times_d U^T\|^2 = \| \antisymm(\alpha e_1 \otimes \cdots \otimes e_d) \|^2 = \alpha^2 \| \antisymm(e_1 \otimes \cdots \otimes e_d) \|^2 = (\alpha d!)^2,
\]
which shows~\eqref{eq:firstequality}.

Consider vectors $v_1,\ldots,v_d$ assuming the maximum in~\eqref{eq:secondequality}. By the QR decomposition, there is an upper triangular matrix $R \in \R^{d\times d}$ with $|r_{\mu\mu}| \le 1$  for $\mu = 1,\ldots,d$ and a matrix $[u_1,\ldots,u_d]$ with orthonormal columns such that
\[
 [v_1,\ldots,v_d] = [u_1,\ldots,u_d] R.
\]
Using Lemma~\ref{lemma:skewscalar},
\begin{eqnarray*}
\calA \times_1 v_1^T \cdots \times_d v_d^T &=& \calA \times_1 \sum_{\mu_1 = 1}^d r_{\mu_1,1} u_{\mu_1}^T \cdots \times_d \sum_{\mu_d = d}^d r_{\mu_d,d} u_{\mu_d}^T  \\
&=& \sum_{\mu_1 = 1}^d \cdots \sum_{\mu_d = d}^d \calA \times_1 r_{\mu_1,1} u_{\mu_1}^T \cdots \times_d r_{\mu_d,d} u_{\mu_d}^T \\
&=& \calA \times_1 r_{11} u_{1}^T \cdots \times_d r_{dd} u_{d}^T = r_{11} \cdots r_{dd} \calA \times_1 u_{1}^T \cdots \times_d u_{d}^T.
\end{eqnarray*}
Because of $|r_{\mu\mu}| \le 1$, this implies $|\calA \times_1 v_1^T \cdots \times_d v_d^T| \le |\calA \times_1 u_{1}^T \cdots \times_d u_{d}^T|$ and hence~\eqref{eq:secondequality} cannot be larger than~\eqref{eq:firstequality}.
On the other hand, trivially,~\eqref{eq:firstequality} cannot be larger than~\eqref{eq:secondequality}. This shows the equality~\eqref{eq:secondequality}.
\end{proof}

\subsection{HOPM}

Algorithm~\ref{alg:HOPM} recalls the higher-order power method (HOPM) proposed in~\cite{LathauwerLowrank2000} for finding a rank-$1$ approximation of a tensor $\calA \in \R^{n\times \cdots \times n}$.

\begin{algorithm}[H] \small
\caption{HOPM for rank-$1$ approximation}\label{alg:HOPM}
\begin{algorithmic}
\STATE Choose initial vectors $u_1, \ldots, u_d \in \R^n$. 
\REPEAT
\STATE $v_1 =\calA\times_2 u_2^T\times_3u_3^T \cdots \times_d u_d^T$
\STATE $u_1 = v_1 / \|v_1\|$
\STATE $v_2 =\calA\times_1 u_1^T\times_3u_3^T \cdots \times_d u_d^T$
\STATE $u_2 = v_2 / \|v_2\|$
\STATE $\ \ \!\quad \vdots$
\STATE $v_d =\calA\times_1 u_1^T\times_2u_2^T \cdots \times_{d-1} u_{d-1}^T$
\STATE $u_d = v_d / \|v_d\|$
\UNTIL convergence
\STATE $\alpha=\calA \times_1 u_1^T \cdots \times_d u_d^T$
\STATE Return approximation $\alpha u_1 \otimes u_2 \otimes \cdots \otimes u_d$.
\end{algorithmic}
\end{algorithm}

Assuming that Algorithm~\ref{alg:HOPM} converges to a best rank-$1$ approximation with mutually orthogonal vectors $u_\mu$, Theorem~\ref{thm:antirank1} allows us to construct the best antisymmetric multilinear rank-$d$ approximation
$\antisymm( \alpha u_1 \otimes \cdots \otimes u_d ).$
The following lemma assures mutual orthogonality.

\begin{lemma}
Let $\calA \in \R^{n\times \cdots \times n}$ be an antisymmetric tensor of order $d\le n$.
Then the vectors $u_1,\ldots, u_d$ returned by HOPM form an orthonormal basis, provided that HOPM does not encounter a zero vector.
\end{lemma}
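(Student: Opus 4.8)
The plan is to show that already after one full sweep of HOPM the stored vectors are mutually orthonormal, and that this property is preserved by every subsequent sweep, so that in particular it holds for the returned vectors. The whole argument runs on Lemma~\ref{lemma:skewscalar}, which forces any complete contraction of $\calA$ in which two distinct modes are contracted against the \emph{same} vector to vanish.

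First I would record the basic observation. Consider the moment in the algorithm at which the update $u_\mu \leftarrow v_\mu/\|v_\mu\|$ is carried out, with
\[
 v_\mu = \calA \times_1 u_1^T \cdots \times_{\mu-1} u_{\mu-1}^T \times_{\mu+1} u_{\mu+1}^T \cdots \times_d u_d^T
\]
and $u_1,\dots,u_{\mu-1},u_{\mu+1},\dots,u_d$ the vectors currently stored. Since $v_\mu$ is a vector associated with the $\mu$-th mode, for any $\nu\neq\mu$ the inner product $v_\mu^T u_\nu$ equals the complete contraction of $\calA$ in which mode $\mu$ is contracted with $u_\nu$, mode $\nu$ is contracted with $u_\nu$, and every remaining mode $\eta$ is contracted with $u_\eta$. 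Applying Lemma~\ref{lemma:skewscalar} to the modes $\mu$ and $\nu$ shows that this scalar is $0$. Hence the freshly normalized $u_\mu$ is orthogonal to every other currently stored vector; the normalization is well defined precisely because $v_\mu\neq 0$, which is the hypothesis of the lemma.

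Next I would trace a single sweep. Write $u_\mu^{(0)}$ for the vectors at the start of the sweep and $u_\mu^{(1)}$ for the vector $u_\mu$ after its update. By the observation above, right after updating $u_\mu$ we have $u_\mu^{(1)}\perp u_\nu^{(1)}$ for all $\nu<\mu$ and $u_\mu^{(1)}\perp u_\nu^{(0)}$ for all $\nu>\mu$. In particular, for any pair $\mu<\nu$ the later update of $u_\nu$ yields $u_\nu^{(1)}\perp u_\mu^{(1)}$, and by symmetry of the inner product all pairs are covered. Since each $u_\mu^{(1)}$ is a unit vector by construction, $\{u_1^{(1)},\dots,u_d^{(1)}\}$ is orthonormal. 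The identical reasoning applies to every later sweep, so the iterates stay orthonormal and so do the vectors $u_1,\dots,u_d$ returned upon convergence. Because $d\le n$, these $d$ mutually orthonormal vectors form an orthonormal basis of the $d$-dimensional subspace they span (of all of $\R^n$ when $d=n$), which is exactly what is needed for Theorem~\ref{thm:antirank1} to apply to the HOPM output.

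I do not anticipate a genuine obstacle. The only point demanding care is the bookkeeping of which stored vectors are the freshly updated ones ($u_\nu^{(1)}$ for $\nu<\mu$) and which still come from the previous sweep ($u_\nu^{(0)}$ for $\nu>\mu$) when evaluating $v_\mu^T u_\nu$, together with keeping track of the mode in which the intermediate vector $v_\mu$ lives, so that the double contraction against $u_\nu$ is correctly identified and Lemma~\ref{lemma:skewscalar} is invoked in the right two modes.
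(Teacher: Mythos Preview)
Your argument is correct and follows the same approach as the paper: use Lemma~\ref{lemma:skewscalar} to show that each freshly computed $v_\mu$ is orthogonal to all other currently stored vectors, so that after one full sweep the family $u_1,\ldots,u_d$ is orthonormal. Your version is simply more explicit about the bookkeeping (distinguishing $u_\nu^{(0)}$ from $u_\nu^{(1)}$), which the paper leaves implicit in the sentence ``each step of HOPM orthogonalizes one of the vectors.''
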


\begin{proof}
After the first step of HOPM, Lemma~\ref{lemma:skewscalar} implies
\[
 \langle v_1, u_\nu \rangle = \calA\times_1 u_\nu^T \times_2 u_2^T\times_3u_3^T \cdots \times_d u_d^T = 0
\]
for any $\nu \not=1$. Hence, each step of HOPM orthogonalizes one of the vectors $u_\mu = v_\mu / \|v_\mu\|$.
In turn, the statement of the lemma holds after at least one sweep of HOPM, even if the initial vectors are not orthogonal.
\end{proof}

\begin{remark}
To ensure orthogonality numerically, we perform another orthogonalization step after each step of Algorithm~\ref{alg:HOPM}.
In principle, this procedure can also be applied to HOOI, yielding an (unstructured) multilinear rank $(r_1,\ldots,r_d)$ approximation with mutually orthogonal basis matrices $U_\mu$. This can then be turned into an antisymmetric multilinear rank-$(r_1 + \cdots + r_d)$ approximation by setting $U = [U_1,\ldots,U_d]$. We have tested this idea numerically and observed that this often yields a good approximation but the approximation error is usually worse compared to the result of the Jacobi algorithm.
\end{remark}

\subsection{Initialization}

It remains to discuss a proper initialization strategy for HOPM. For general $d$, we use the truncated HOSVD from Section~\ref{sec:multilinrankapprox}. For $d = 4$, we propose an antisymmetric variant of the technique proposed by Kofidis and Regalia~\cite{Kofidis2002} for symmetric tensors. Antisymmetric tensors of order $d = 4$ appear, for example, for wave functions describing the position of four fermions.
For this purpose, we define the $(1,2)$-matricization of a $4$th-order tensor $\calX \in \R^{n_1\times n_2\times n_3 \times n_4}$ to be the $n_1 n_2 \times n_3 n_4$ matrix $\mathbf{X}_{(1,2)}$ with the entries
\begin{equation}\label{eq:mat12}
\mathbf{X}_{(1,2)}(k,\ell) = \calX(i_1,i_2,i_3,i_4), \qquad k = j(i_1,i_2), \quad \ell = j(i_3,i_4),
\end{equation}
where the function $j(\cdot)$ is defined as in~\eqref{eq:defmat}.

\begin{lemma}\label{lemma:squaremat}
Let $\calA \in \R^{n\times n\times n \times n}$ be antisymmetric. Then the following statements hold:
\begin{enumerate}
\item $\mathbf{A}_{(1,2)}$ is symmetric.
\item Let $\lambda$ be a nonzero eigenvalue of $\mathbf{A}_{(1,2)}$ with eigenvector $v \in \R^{n^2}$. Then the matricization $\mathbf{V}_{(1)} \in \R^{n\times n}$
 of $v$ is skew-symmetric.
\item If $\calA = \antisymm(\alpha u_1\otimes u_2\otimes u_3\otimes u_4)$ such that $\alpha\not=0$ and $[u_1,u_2,u_3,u_4]$ is an orthonormal basis then
 $\mathbf{A}_{(1,2)}$ has an eigenvalue $\alpha/12$ of multiplicity $3$, an eigenvalue $-\alpha/12$ of multiplicity $3$, and $n^2-6$ zero eigenvalues. Any eigenvector $v$ belonging to a nonzero eigenvalue satisfies $\text{\rm range}(\mathbf V_{(1)}) = \text{\rm span}\{ u_1,u_2,u_3,u_4\}$.
\end{enumerate}
\end{lemma}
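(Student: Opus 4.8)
The plan is to prove the three statements more or less in order, using antisymmetry of $\calA$ to control the behavior of $\mathbf{A}_{(1,2)}$ under transposition and under the matricization identification $v \leftrightarrow \mathbf V_{(1)}$.

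For statement (1), I would note that $\mathbf{A}_{(1,2)}(j(i_1,i_2), j(i_3,i_4)) = \calA(i_1,i_2,i_3,i_4)$, and transposing swaps the pair $(i_1,i_2)$ with $(i_3,i_4)$, i.e.\ it returns $\calA(i_3,i_4,i_1,i_2)$. Since the permutation $(3,4,1,2)$ is a product of two transpositions (it is even), antisymmetry gives $\calA(i_3,i_4,i_1,i_2) = \calA(i_1,i_2,i_3,i_4)$, so $\mathbf{A}_{(1,2)}^T = \mathbf{A}_{(1,2)}$. For statement (2), I would use the standard fact that the identification $v \leftrightarrow \mathbf V_{(1)}$ turns the action $w = \mathbf{A}_{(1,2)} v$ into a tensor contraction: $\mathbf W_{(1)} = $ (reshape of $\calA$ contracted with $\mathbf V_{(1)}$ over the last two modes). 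Concretely, if $\lambda v = \mathbf{A}_{(1,2)} v$ then $\lambda \mathbf V_{(1)}(i_1,i_2) = \sum_{i_3,i_4} \calA(i_1,i_2,i_3,i_4)\mathbf V_{(1)}(i_3,i_4)$. Swapping $i_1 \leftrightarrow i_2$ on the right and using $\calA(i_2,i_1,i_3,i_4) = -\calA(i_1,i_2,i_3,i_4)$ shows $\lambda \mathbf V_{(1)}(i_2,i_1) = -\lambda \mathbf V_{(1)}(i_1,i_2)$; since $\lambda \ne 0$ this forces $\mathbf V_{(1)}$ to be skew-symmetric.

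For statement (3) — which I expect to be the main obstacle — I would compute $\mathbf{A}_{(1,2)}$ explicitly for $\calA = \antisymm(\alpha u_1 \otimes u_2 \otimes u_3 \otimes u_4)$ with $[u_1,u_2,u_3,u_4]$ orthonormal. Expanding the antisymmetrizer over $S_4$ (24 terms, with $1/4! = 1/24$) and using the identification above, $\mathbf{A}_{(1,2)}$ acts on $v \cong \mathbf V_{(1)}$ by contracting each rank-one term $u_{\pi(1)} \otimes u_{\pi(2)} \otimes u_{\pi(3)} \otimes u_{\pi(4)}$ with $\mathbf V_{(1)}$ over modes $3,4$, producing $\langle u_{\pi(4)}, \mathbf V_{(1)} u_{\pi(3)} \rangle \, u_{\pi(1)} u_{\pi(2)}^T$ (in matrix form). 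Hence, the range of $\mathbf V_{(1)}$ for any nonzero eigenvector is forced into $\Span\{u_1,u_2,u_3,u_4\}$ (giving the last assertion), and it suffices to restrict $\mathbf{A}_{(1,2)}$ to the $6$-dimensional space of skew-symmetric matrices supported on that span, with basis $\{u_p u_q^T - u_q u_p^T : 1 \le p < q \le 4\}$. On this space, a direct computation shows $\mathbf{A}_{(1,2)}$ acts (up to the overall factor $\alpha/24$, combined with the $2$ from symmetrizing the contraction pair) so that the Plücker-type pairing $(p,q) \mapsto (r,s)$ given by the complementary pair in $\{1,2,3,4\}$ appears; concretely $\mathbf{A}_{(1,2)}$ sends $u_1u_2^T - u_2u_1^T \mapsto \pm\frac{\alpha}{12}(u_3u_4^T - u_4u_3^T)$ and similarly for the pairs $(1,3)\leftrightarrow(2,4)$ and $(1,4)\leftrightarrow(2,3)$, with signs dictated by $\text{sign}(\pi)$. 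Diagonalizing each of these three $2\times 2$ blocks $\frac{\alpha}{12}\smb 0 & \pm 1 \\ \pm 1 & 0 \sme$ yields eigenvalues $+\alpha/12$ and $-\alpha/12$, each with multiplicity $3$; the remaining $n^2 - 6$ eigenvalues vanish because $\mathbf{A}_{(1,2)}$ annihilates everything orthogonal to this $6$-dimensional space. The main care needed is bookkeeping the signs from $\text{sign}(\pi)$ and the factor accounting that each unordered pair $\{i_3,i_4\}$ is hit twice; I would organize the $24$ permutations by the partition of $\{1,2,3,4\}$ into the "left pair" $\{\pi(1),\pi(2)\}$ and "right pair" $\{\pi(3),\pi(4)\}$ to make the block structure transparent.
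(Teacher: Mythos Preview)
Your proposal is correct and follows essentially the same route as the paper. Parts (1) and (2) are identical to the paper's arguments, and for part (3) the paper likewise computes the action of $\mathbf{A}_{(1,2)}$ on the vectors $u_{\pi(1)}\otimes u_{\pi(2)} - u_{\pi(2)}\otimes u_{\pi(1)}$, obtains the complementary-pair map $\{\pi(1),\pi(2)\}\mapsto\{\pi(3),\pi(4)\}$ with coefficient $\sigma\alpha/12$, and reads off the two three-dimensional eigenspaces; your framing in terms of three $2\times 2$ blocks is just a repackaging of the same computation. One small point: the containment $\text{range}(\mathbf V_{(1)})\subset\Span\{u_1,\dots,u_4\}$ alone does not give the \emph{equality} asserted in the lemma; that follows only after you have the explicit eigenvectors and observe (as the paper does, and as a Pfaffian computation confirms) that every nonzero element of each eigenspace has full rank $4$ on $\Span\{u_1,\dots,u_4\}$.
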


\begin{proof}
1. This statement follows directly from the definition~\eqref{eq:mat12}:
\[
 \mathbf{A}_{(1,2)}(k,\ell) = \calA(i_1,i_2,i_3,i_4) = \calA(i_3,i_4,i_1,i_2) = \mathbf{A}_{(1,2)}(\ell,k).
\]
2. The relation $\mathbf{A}_{(1,2)}v = \lambda v$ implies
\begin{eqnarray*}
\mathbf{V}_{(1)}(i_1,i_2) &=& v(j(i_1,i_2)) = \frac{1}{\lambda} \sum_{i_3,i_4 = 1}^n \mathbf{A}_{(1,2)}(j(i_1,i_2),j(i_3,i_4)) v(j(i_3,i_4)) \\
&=& \frac{1}{\lambda} \sum_{i_3,i_4 = 1}^n \calA(i_1,i_2,i_3,i_4) v(j(i_3,i_4)) \\
&=& -\frac{1}{\lambda} \sum_{i_3,i_4 = 1}^n \calA(i_2,i_1,i_3,i_4) v(j(i_3,i_4)) = -\mathbf{V}_{(1)}(i_2,i_1),
\end{eqnarray*}
which shows that $\mathbf{V}_{(1)}$ is skew-symmetric. \\[0.1cm]
3. By the definition of $\calA$, $\text{range}(\mathbf V_{(1)}) \subset \text{span}\{ u_1,u_2,u_3,u_4\}$ and, together with its skew-symmetry, this implies that $\mathbf V_{(1)}$ is a linear combination of matrices $u_i u_j^T - u_j u_i^T$ for all $i\not = j$. Let $\pi \in S_d$ and set $\sigma = \text{sign}(\pi)$. Using Lemma~\ref{lemma:skewscalar}, we have
\begin{eqnarray*}
&& \mathbf{A}_{(1,2)} \big( u_{\pi(1)} \otimes u_{\pi(2)} - u_{\pi(2)} \otimes u_{\pi(1)} \big) \\
&=& \text{vec}\big( \calA \times_1 u_{\pi(1)} \times_2 u_{\pi(2)} - \calA \times_1 u_{\pi(2)} \times_2 u_{\pi(1)} \big) \\
&=& \frac{\alpha}{24} \big( \sigma u_{\pi(3)} \otimes u_{\pi(4)} - \sigma u_{\pi(4)} \otimes u_{\pi(3)} + \sigma u_{\pi(3)} \otimes u_{\pi(4)} - \sigma u_{\pi(4)} \otimes u_{\pi(3)} \big) \\
&=& \frac{\sigma \alpha}{12}\big( u_{\pi(3)} \otimes u_{\pi(4)} - u_{\pi(4)} \otimes u_{\pi(3)} \big).
\end{eqnarray*}
In turn, there is an eigenspace of dimension three with orthonormal basis
\begin{equation}\label{eq:poseigenspace}
\begin{array}{ll}
 \big( u_1u_2^T - u_2u_1^T + u_3u_4^T - u_4u_3^T \big) / 2, \\
 \big( u_1u_4^T - u_4u_1^T + u_2u_3^T - u_3u_2^T \big) / 2, \\
 \big( u_3u_1^T - u_1u_3^T + u_2u_4^T - u_3u_2^T \big) / 2, \\
\end{array}
\end{equation}
belonging to the eigenvalue $\alpha / 12$. Due to the orthogonality of $u_1,u_2,u_3,u_4$ the range of any linear combination of~\eqref{eq:poseigenspace} equals
$\text{span}\{ u_1,u_2,u_3,u_4\}$.
Analogously, there is an eigenspace of dimension three belonging to the eigenvalue $-\alpha / 12$ with the same property.
\end{proof}

Lemma~\ref{lemma:squaremat}.3 suggests the initialization strategy described in Algorithm~\ref{alg:HOPMinitialization}.

\begin{algorithm}[H] \small
\caption{HOPM initialization strategy for antisymmetric tensor of order $4$}
\label{alg:HOPMinitialization}
\begin{algorithmic}
\STATE Compute eigenvector $v \in\R^{n^2}$ associated with eigenvalue of largest magnitude $\bfA_{(1,2)}$.
\STATE Form $\mathbf{V}_{(1)} \in\R^{n\times n}$ and compute its SVD.
\STATE Return the four leading left singular vectors $u_1,u_2,u_3,u_4$.
\end{algorithmic}
\end{algorithm}

\subsection{Numerical Experiments}

To investigate the difference between the different initializations, we focus our experiments on antisymmetric tensors of order four.

Figure~\ref{fig:HOPMinitialization} shows the approximation errors returned by HOPM initialized with truncated HOSVD or Algorithm~\ref{alg:HOPMinitialization}, using the random antisymmetric tensors described in Section~\ref{sec:approxnumexp}.
HOPM is considered converged when the norm of the gradient of the objective function reaches $10^{-10}$ or below. It can be seen that both initialization strategies appear to work equally well in terms of the final approximation error.

\begin{figure}[h]
    \centering
    \includegraphics[width=0.49\textwidth]{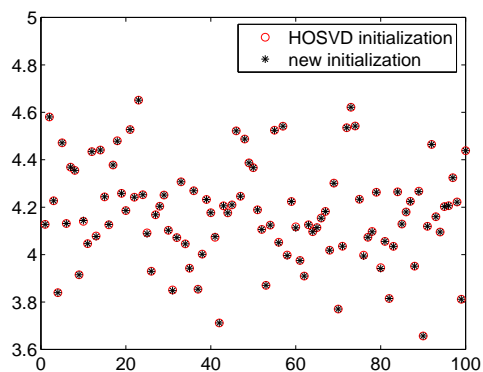}
    \caption{Approximation error of multilinear rank-$4$ approximation produced by HOPM for $100$ random antisymmetric $10\times10\times10\times10$ tensors.}\label{fig:HOPMinitialization}
\end{figure}

Figure~\ref{fig:HOPMgrad} shows the convergence behavior for a typical run.
It turns out that initializing with Algorithm~\ref{alg:HOPMinitialization} gives a significant convergence benefit  both for the approximation error and the norm of the gradient.

\begin{figure}[h]
    \centering
    \begin{subfigure}[b]{0.45\textwidth}
        \includegraphics[width=\textwidth]{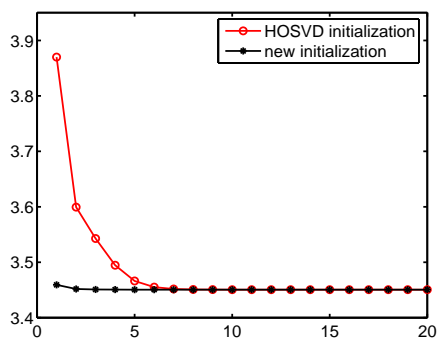}
        \caption{Approximation error}
    \end{subfigure}
    \begin{subfigure}[b]{0.45\textwidth}
        \includegraphics[width=\textwidth]{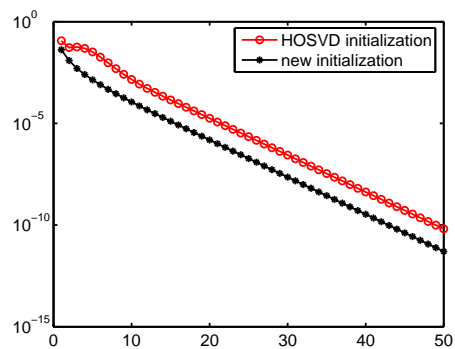}
        \caption{Norm of gradient of objective function}
    \end{subfigure}
    \caption{Convergence behavior of HOPM for multilinear rank-$4$ approximation of random antisymmetric $10\times10\times10\times10$ tensor.}\label{fig:HOPMgrad}
\end{figure}

Finally, analogous to Section~\ref{sec:approxnumexp}, Figure~\ref{fig:Frank1} shows results for the $10\times 10\times 10\times 10$ tensor generated by the function
\[f(x,y,z,w) = \exp( -\sqrt{x^2 + 2 y^2 + 3z^2 + 4w^2}).\]
In this case, both initialization methods yield excellent approximations, with the new initialization resulting in significantly fewer iterations. The same observation can be made in Figure~\ref{fig:beylkin2} for the $9\times 9\times 9\times 9$ tensor corresponding to the antisymmetric ground state described in Section~\ref{sec:approxnumexp}.

\begin{figure}[h]
     \centering
    \begin{subfigure}[b]{0.45\textwidth}
        \includegraphics[width=\textwidth]{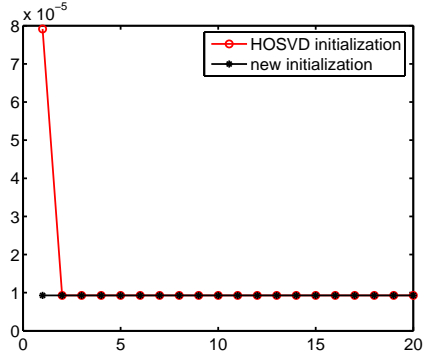}
        \caption{Approximation error}
    \end{subfigure}
    \begin{subfigure}[b]{0.45\textwidth}
        \includegraphics[width=\textwidth]{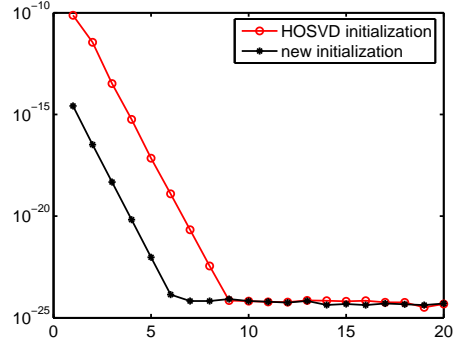}
        \caption{Norm of gradient of objective function}
    \end{subfigure}
    \caption{Convergence behavior of HOPM algorithm for multilinear rank-$4$ approximation of function-related tensor.}\label{fig:Frank1}
\end{figure}

\begin{figure}[h]
    \centering
    \begin{subfigure}[b]{0.45\textwidth}
        \includegraphics[width=\textwidth]{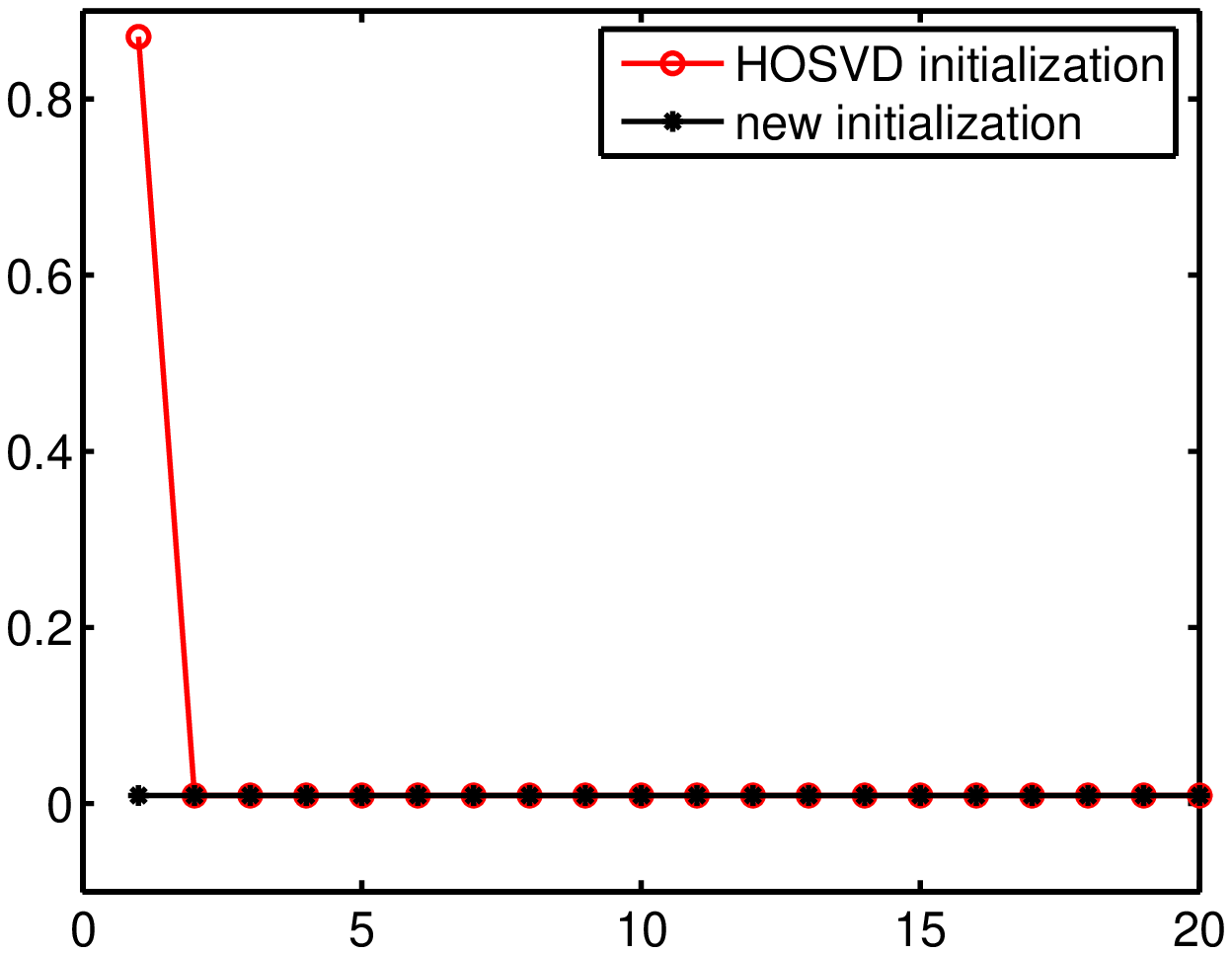}
        \caption{Approximation error}
    \end{subfigure}
    \begin{subfigure}[b]{0.45\textwidth}
        \includegraphics[width=\textwidth]{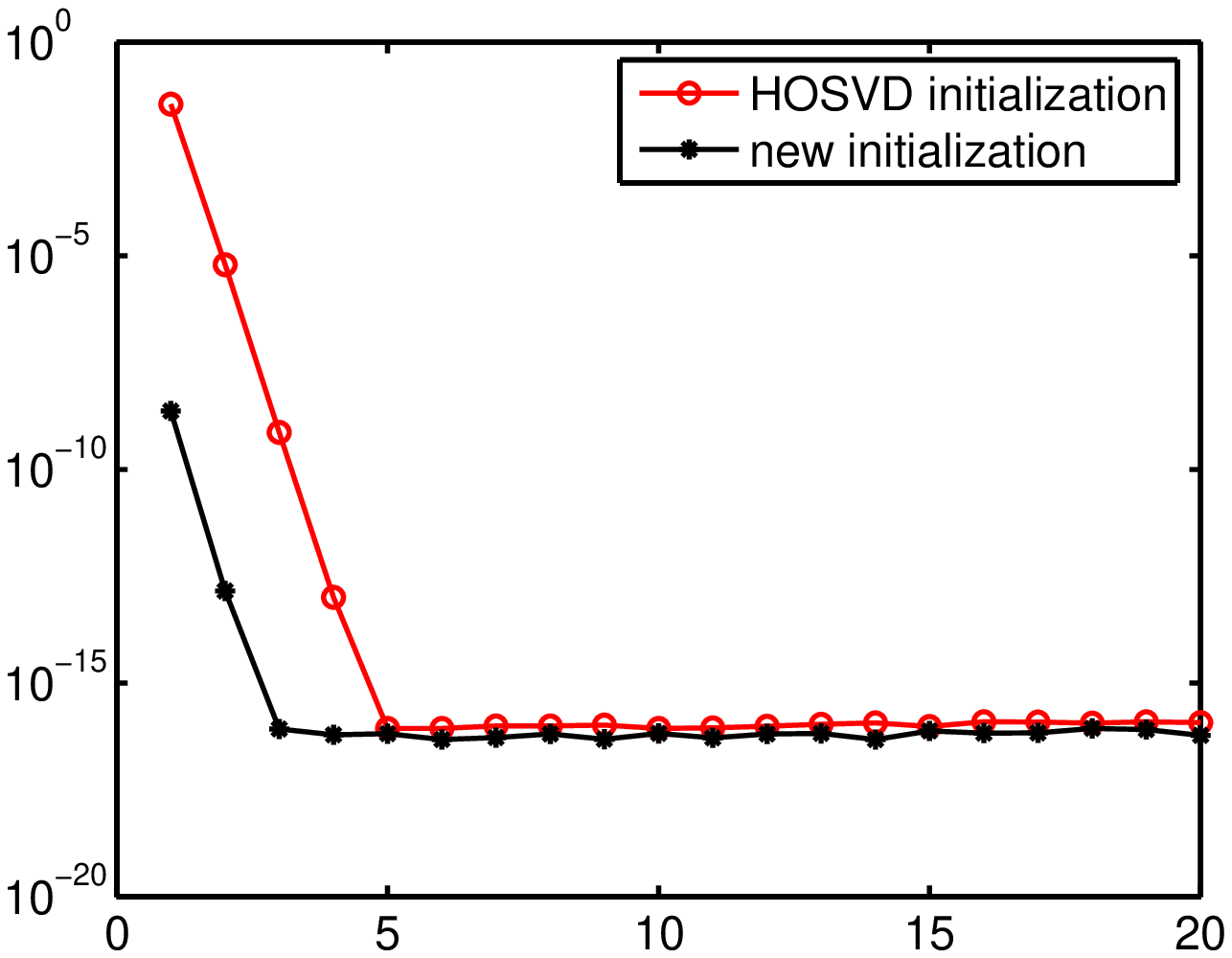}
        \caption{Norm of gradient of objective function}
    \end{subfigure}
    \caption{Convergence behavior of HOPM algorithm for multilinear rank-$4$ approximation of antisymmetric ground state for the discretized Hamiltonian~\eqref{eq:hamiltonian} with $d = 4$ and $n = 9$.}\label{fig:beylkin2}
\end{figure}

\section{Conclusions}

The multilinear rank of an antisymmetric tensor has been analyzed and new algorithms for antisymmetric low multilinear rank approximation have been proposed. The Jacobi algorithm initialized with truncated HOSVD preserves antisymmetry and appears to enjoy excellent global convergence properties. We have shown that a best unstructured rank-$1$ approximation can always be turned into a best antisymmetric multilinear rank-$d$ approximation. In such a scenario,
HOPM initialized either with truncated HOSVD (for $d\not=4$) or Algorithm~\ref{alg:HOPMinitialization} (for $d = 4$) is certainly the method of choice.
The algorithms discussed in this paper could provide a building block in the design of low-rank tensor algorithms~\cite{Grasedyck2013a} for eigenvalue problems
with antisymmetric eigenvectors. In particular, the simplicity of HOPM makes it well suited in the context of truncated iterations and greedy strategies.

\bibliographystyle{plain}

\end{document}